\let\phi=\varphi
\newtheorem{problema}{Problem}
\newcommand{\Kcombinator}[0]{\mathsf{K}}
\newcommand{\rul}[1]{\mathrm{#1}}
\newcommand{\Dten}[0]{\rul{D10}}
\newcommand{\Dtenprime}[0]{\rul{D10^{\prime}}}
\newcommand{\Dtenstar}[0]{\rul{D10}^{*}}
\newcommand{\Deleven}[0]{\rul{D11}}
\newcommand{\Dtwelve}[0]{\rul{D12}}
\newcommand{\Dthirteen}[0]{\rul{D13}}
\newcommand{\Erule}[0]{\rul{E}}
\newcommand{\ruleset}[1]{\mathrm{#1}}
\newcommand{\Druleset}[0]{\ruleset{D}}
\newcommand{\Eruleset}[0]{\ruleset{E}}
\newcommand{\Erulesetstar}[0]{\ruleset{E}^{*}}
\newcommand{\Nruleset}[0]{\ruleset{N}}
\newcommand{\CLruleset}[0]{\ruleset{CL}}
\newcommand{\logic}[1]{\mathsf{#1}}
\newcommand{\CL}[0]{\logic{CL}}
\newcommand{\IL}[0]{\logic{IL}}
\newcommand{\N}{\logic{N}}
\newcommand{\LQ}{\logic{LQ}}
\newcommand{\Eprimevalid}[0]{\logic{E^{\prime}}}
\newcommand{\Stable}[0]{\logic{S}}
\newcommand{\valid}[2]{\vDash_{#1}{#2}}
\newcommand{\invalid}[2]{\nvDash_{#1}{#2}}
\newcommand{\Dvalid}[1]{\valid{\Druleset}{#1}}
\newcommand{\Evalid}[1]{\valid{\Eruleset}{#1}}
\newcommand{\CLvalid}[1]{\valid{\CLruleset}{#1}}
\newcommand{\Estarvalid}[1]{\valid{\Erulesetstar}{#1}}
\newcommand{\Nvalid}[1]{\valid{N}{#1}}
\newcommand{\Ninvalid}[1]{\invalid{N}{#1}}
\newcommand{\setofvalid}[1]{\mathcal{D}(#1)}
\newcommand{\WEM}[0]{\mathrm{WEM}}
\renewcommand{\implies}[2]{#1 \rightarrow #2}
\newcommand{\proponent}[0]{\mathbf{P}}
\newcommand{\opponent}[0]{\mathbf{O}}
\newcommand{\initialmove}[1]{0 & $\proponent$ & #1 & \emph{(initial move)}}
\newcommand{\prow}[4]{#1 & $\proponent$ & #2 & [#3,#4]}
\newcommand{\orow}[4]{#1 & $\opponent$ & #2 & [#3,#4]}
\begin{document}
\title{Toward a structure theory for Lorenzen dialogue games}
\titlerunning{Structure theory for dialogues}
\author{Jesse Alama\thanks{Partially supported by the ESF
research project \emph{Dialogical Foundations of Semantics} within
the ESF Eurocores program \emph{LogICCC} (funded by the Portuguese
Science Foundation, FCT LogICCC/0001/2007).  The author wishes to thank Sara Uckelman for her comments and suggestions.}}
\tocauthor{Jesse Alama}
\institute{Theory and Logic Group{\\}Technical University of Vienna{\\}\email{alama@logic.at}}

\maketitle

\begin{abstract}
  Lorenzen dialogues provide a two-player game formalism that can
  characterize a variety of logics: each set $S$ of rules for such a
  game determines a set $\setofvalid{S}$ of formulas for which one of
  the players (the so-called Proponent) has a winning strategy, and
  the set $\setofvalid{S}$ can coincide with various logics, such as
  intuitionistic, classical, modal, connexive, and relevance logics.
  But the standard sets of rules employed for these games are often
  logically opaque and can involve subtle interactions among each
  other.  Moreover, $\setofvalid{S}$ can vary in unexpected ways with
  $S$; small changes in $S$, even logically well-motivated ones, can
  make $\setofvalid{S}$ logically unusual.  We pose the problem of
  providing a structure theory that could explain how $\setofvalid{S}$
  varies with $S$, and in particular, when $\setofvalid{S}$ is closed
  under modus ponens (and thus constitutes at least a minimal kind of
  logic).
\end{abstract}

\section{Introduction}

Lorenzen dialogue games~\cite{lorenzen1958} were offered as an
alternative game-theoretic formalism for intuitionistic logic (both
propositional and first-order).  The first player, Proponent
($\proponent$), lays down a logical formula and strives to
successfully respond to the assaults of Opponent.  The motion of the
game is determined by rules that depend on the structure of a formula
appearing in the game (which is always a subformula or, in the case of
a first-order game, an instance of, the initial formula played by
$\proponent$), as well as by rules that depend less on the form of
the formula at issue but rather concern the global structure of the
game and what kinds of roles can permissibly be played by $\proponent$
and Opponent (who are not merely dual to one another, as the players
often are in other logic games~\cite{sep-logic-games}).  Although
Felscher's equivalence theorem cleanly relates winning strategies of
Lorenzen games to intuitionistic validity, the rules for these games
are not entirely straightforward and indeed some of them appear to be
arbitrary.


Lorenz claimed that Lorenzen's dialogue games offer a new type of
semantics for intuitionistic logic and asserts the equivalence between
dialogical validity (defined in terms of winning strategies for the
$\proponent$) and intuitionistic derivability~\cite{lor9,lor10}. Lorenz's
proof contained some gaps, and later authors sought to fill these
gaps; a complete proof can be found in~\cite{felscher1985}.

Dialogue games are not restricted to intuitionistic logic.  By
modifying the rules of the game, the dialogue approach can also
provide a semantics for classical logic. The dialogical approach can
be adapted equally well to capture validity for other logics, such as
paraconsistent, connexive, modal and linear
logics~\cite{sep-dialogical-logic,ruck}.  All of these extensions of
Lorenzen's and Lorenz's initial formulation of dialogue games are
achieved by modifying the rules of the game while maintaining the
overall dialogical flavor.

The fact that there is no principled restriction on how the dialogical
rules can be modified naturally raises the question of when the set of
$S$-valid formulas, for a particular set $S$ of dialogical rules,
actually corresponds to a logic.  That is, we are interested in
identifying desirable properties of the set of $S$-valid formulas in
order to give it some logical sensibility.  One such desirable
property is that the set be closed under modus ponens: If $\phi$ and
$\phi \rightarrow \psi$ are $S$-dialogically valid, then so should
$\psi$ be.  We propose to call the problem of resolving whether a set
$S$ of rules for dialogue games satisfies this property the
\emph{composition problem} for $S$.

The structure of this paper is as follows.  The next section
introduces dialogue games and provides a few examples to make the
reader familiar with the basic definitions and notation.  Section 3
poses the \emph{composition problem}.  We generalize the problem and
motivate it from two perspectives of dialogues.  Section~\ref{sec:vary-dial-rules} presents
the results of three initial experiments that bear on the composition
problem.  Section~\ref{sec:nearly} describes a curious dialogical logic called $\N$.  Section~\ref{attempted} describes a failed (but apparently well-motivated) attempted dialogical characterization of the
intermediate logic $\LQ$ of weak excluded middle.  Section~\ref{sec:stable} takes on the problem of giving a dialogical characterization of stable logic (the intermediate logic in which the principle $\neg\neg p \rightarrow p$ holds for atoms $p$).  Section~\ref{sec:independent} motivates the
problem of giving independent rulesets and argues that it may be a
useful first step toward solving (instances of) the composition
problem.  Section~\ref{sec:conclusion} concludes and offers a few open problems for consideration.

\section{Dialogue games}

We largely follow Felscher's approach to dialogical
logic~\cite{felscher1985}.  For an overview of dialogical logic,
see~\cite{sep-dialogical-logic}.

We work with a propositional language; formulas are built from atoms
and $\neg$, $\vee$, $\wedge$, and $\rightarrow$.  In addition to
formulas, there are the three so-called \emph{symbolic attack}
expressions, $?$, $\wedge_{L}$, and $\wedge_{R}$, which are distinct
from all the formulas and connectives.  Together formulas and symbolic
attacks are called statements; they are what is asserted in a dialogue
game.

The rules governing dialogues are divided into two types.
\emph{Particle} rules say how statements can be attacked and defended
depending on their main connective.  \emph{Structural} rules define
what sequences of attacks and defenses count as dialogues.  Different
logics can be obtained by modifying either set of rules.

\begin{table}[t]
  \centering
  \setlength{\tabcolsep}{5pt}
  \begin{tabular}{c|c|c}
    \textbf{Assertion} & \textbf{Attack} & \textbf{Response}\\
    \hline
    $\phi \wedge \psi$       & $\wedge_{L}$ & $\phi$\\
                             & $\wedge_{R}$ & $\psi$\\
    $ \phi \vee \psi$        & $?$         & $\phi$ or $\psi$\\
    $ \phi \rightarrow \psi$ & $\phi$      & $\psi$\\
    $ \neg\phi$              & $\phi$      & ---
  \end{tabular}
\medskip
  \caption{Particle rules for dialogue games}
  \label{tab:particle-rules}
\end{table}

The standard particle rules are given in Table
\ref{tab:particle-rules}. According to the first row, there are two
possible attacks against a conjunction: The attacker specifies whether
the left or the right conjunct is to be defended, and the defender
then continues the game by asserting the specified conjunct.  The
second row says that there is one attack against a disjunction; the
defender then chooses which disjunct to assert.  The interpretation of
the third row is straightforward.  The fourth row says that there is
no way to defend against the attack against a negation; the only
appropriate ``defense'' against an attack on a negation $\neg\phi$ is
to continue the game with the new information $\phi$.

Further constraints on the development of a dialogue are given by the
structural rules.  In this paper we keep the particle rules fixed, but
we shall consider a few variations of the structural rules.

\begin{definition}
  Given a set $S$ of structural rules, an \emph{$S$-dialogue} for a
  formula $\phi$ is a dialogue commencing with $\phi$ that adheres to
  the rules of $S$. $\proponent$ \emph{wins} an $S$-dialogue if
  $\proponent$ made the last move in the dialogue and no moves are
  available for $\opponent$ by which the game could be extended.
\end{definition}

\begin{remark}
  According to this definition, if the dialogue \emph{can} go on, then
  neither player is said to win; the game proceeds as long as moves
  are available.  $\proponent$ wins by making a winning move; in other
  presentations of dialogue games such as
  Fermüller's~\cite{fermueller2003}, $\proponent$ wins when Opponent
  makes a \emph{losing} move.
\end{remark}

Winning strategies for dialogue games can be used to capture notions
of validity.

\begin{definition}
  For a set $S$ of dialogue rules and a formula $\phi$, the relation
  $\valid{S}{\phi}$ means that $\proponent$ has an $S$-winning strategy
  for $\phi$.  If $\nvDash_{S} \phi$, then we say that $\phi$ is
  $S$-invalid.  $\setofvalid{S}$ is the set $\{ \phi \colon
  \valid{S} \phi \}$.
\end{definition}
Note that, like usual proof-theoretic characterizations of validity,
dialogue validity is an existential notion, unlike the usual
model-theoretic notions of validity, which are universal notions.

We now consider two standard rule sets from the dialogue literature.
\begin{definition}
  The rule set $\Druleset$ is comprised of the following structural rules~\cite[p.~220]{felscher1985}:
  \begin{enumerate}
  \item[$(\Dten)$] $\proponent$ may assert an atomic formula only
    after it has been asserted by $\opponent$ before.
  \item[$(\mathrm{D11})$] When defending, only the most recent open
    attack (that is, attack against which no defense has yet been
    played) may be responded to.
  \item[$(\mathrm{D12})$] An attack may be answered at most once.
  \item[$(\Dthirteen)$] A $\proponent$-assertion may be attacked at most once.
  \end{enumerate}
\end{definition}


\begin{definition}
  The rule set $\mathrm{D}+\mathrm{E}$ is $\mathrm{D}$ plus the following rule:
  \begin{enumerate}
  \item[$(\mathrm{E})$] $\opponent$ can react only upon the immediately preceding $\proponent$-statement.
  \end{enumerate}
\end{definition}
\begin{definition}
  The rule set $\CLruleset$ is $\Eruleset - \{ \Deleven, \Dtwelve \}$.
\end{definition}
To give a sense of how these games proceed, let us look at a few
concrete examples of them.  In the following, note that we are working
with concrete formulas; ``$p$'' and ``$q$'' in the following are
concrete atomic formulas (atoms) and should not be read schematically
(indeed, if one were to substitute more complex formulas for $p$ and
$q$ in what follows, the examples would become incomplete in the sense
that they no longer necessarily represent wins or losses for
$\proponent$).

\begin{example}
  Let us consider a simple intuitionistic validity, the
  $\Kcombinator$-formula.  Table~\ref{k-formula-e-dialogue} lays out a
  concrete game for this formula.
\begin{table}[t]
  \centering
  \setlength{\tabcolsep}{5pt}
  \begin{tabular}{rcll}
    \initialmove{$\implies{p}{(\implies{q}{p})}$}\\
    \orow{1}{$p$}{A}{0}\\
    \prow{2}{$\implies{q}{p}$}{D}{1}\\
    \orow{3}{$q$}{A}{2}\\
    \prow{4}{$p$}{D}{3}
  \end{tabular}
\medskip
\caption{\label{k-formula-e-dialogue}An $\Eruleset$-dialogue for $\implies{p}{(\implies{q}{p})}$:}
\end{table}
This dialogue adheres to the $\Erule$-rule because $\opponent$ is
always responding to the immediately prior statement of $\proponent$.
Note that $\proponent$ is permitted to assert the atom $p$ at move $3$
because $\opponent$ already asserted it at move $1$.  $\proponent$
wins this game: $\opponent$ can make no further moves: the
$\Erule$-forces $\opponent$ to respond to move $4$ (in fact, it must
be attacked), but, in light of the particle rules, attacks on atoms
are not permitted.
\end{example}
\begin{example}
  Table~\ref{excluded-middle-e-dialogue} treats the classical law of the excluded middle, $p \vee \neg p$.
\begin{table}[t]
  \centering
  \setlength{\tabcolsep}{5pt}
  \begin{tabular}{rcll}
    \initialmove{$p \vee \neg p$}\\
    \orow{1}{?}{A}{0}\\
    \prow{2}{$\neg p$}{D}{1}\\
    \orow{3}{$p$}{A}{2}
  \end{tabular}
\medskip
\caption{\label{excluded-middle-e-dialogue}An $\Eruleset$-dialogue for excluded middle}
\end{table}
This short $\Eruleset$-dialogue (which, incidentally, is also a
$\Druleset$-dialogue) leads to a loss for $\proponent$: as in the
previous example, $\proponent$ is stuck.
\end{example}
\begin{example}
  Returning to excluded middle, let's see how the game goes when we
  change from intuitionistic to classical rules; see
  Table~\ref{excluded-middle-cl-dialogue}.
\begin{table}[t]
  \centering
  \setlength{\tabcolsep}{5pt}
  \begin{tabular}{rcll}
    \initialmove{$p \vee \neg p$}\\
    \orow{1}{?}{A}{0}\\
    \prow{2}{$\neg p$}{D}{1}\\
    \orow{3}{$p$}{A}{2}\\
    \prow{4}{$p$}{D}{1}
  \end{tabular}
\medskip
\caption{\label{excluded-middle-cl-dialogue}A $\CLruleset$-dialogue for excluded middle}
\end{table}
The difference between this dialogue, which $\proponent$ wins, and the
previous dialogue, which $\proponent$ lost, is that $\proponent$ can
now return to earlier attacks and defend against them in a new way.
The absence of rule $\Deleven$ from $\CLruleset$ makes the difference.
\end{example}
These examples should serve to give the reader a sense for how
dialogue games proceed, as one varies the rules.  Despite their apparent lack of logical
meaning, the rule sets $\Druleset$ and $\Eruleset$ have the following
property:
\begin{theorem}[Felscher]
  For all formulas $\phi$, the following are equivalent:
  \begin{itemize}
  \item $\phi$ is intuitionistically valid.
  \item $\Dvalid{\phi}$.
  \item $\Evalid{\phi}$.
  \end{itemize}
\end{theorem}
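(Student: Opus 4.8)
The plan is to route both dialogue notions through a cut-free sequent calculus for intuitionistic propositional logic: I would first prove that $\Evalid{\phi}$ is equivalent to intuitionistic validity, and then obtain $\Dvalid{\phi} \Leftrightarrow \Evalid{\phi}$ as the remaining piece. Composing these with the standard soundness and completeness of the sequent calculus with respect to intuitionistic validity then closes the three-way equivalence.

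For the central equivalence I would exhibit a translation between $\proponent$ winning strategies and derivations. A $\proponent$ winning strategy is naturally a tree whose branching records the possible replies of $\opponent$ and whose nodes record the statements asserted; the idea is to read such a tree as a sequent derivation in which the formulas $\opponent$ has asserted so far sit on the left of the turnstile, as available hypotheses, while the formula $\proponent$ is currently obliged to defend sits on the right. Under this reading the particle rules of Table~\ref{tab:particle-rules} correspond to the logical rules of the calculus: an attack by $\opponent$ on a $\proponent$-assertion drives a right rule (with the subsequent $\proponent$-defense choosing the premise, as in $\vee$-right), while an attack by $\proponent$ on an $\opponent$-assertion drives a left rule. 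The structural rules are what force the strategy tree to be built in exactly the bottom-up, most-recent-obligation-first order of a sequent proof: $\Deleven$ makes $\proponent$ answer only the latest open attack, matching the single-succedent discipline of the intuitionistic calculus, and $\Erule$ makes $\opponent$ react only to the latest $\proponent$-statement, so that there are no stray interleavings to reconcile. Soundness (strategy to derivation) and completeness (derivation to strategy) then each proceed by induction — on the height of the strategy tree in one direction and on the height of the derivation in the other — with the particle rules supplying the case analysis at each node.

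For $\Dvalid{\phi} \Leftrightarrow \Evalid{\phi}$ one direction is immediate: since $\Eruleset = \Druleset \cup \{\Erule\}$, every $\Eruleset$-dialogue is a $\Druleset$-dialogue, so $\opponent$ has strictly fewer legal plays under $\Eruleset$; hence any $\Druleset$-winning strategy restricts to an $\Eruleset$-winning one, giving $\Dvalid{\phi} \Rightarrow \Evalid{\phi}$. The reverse direction is where I expect the real work to lie and is the main obstacle: I must show that the freedom $\Erule$ denies to $\opponent$ — the ability to defer an attack and return later to an older $\proponent$-assertion — never actually helps $\opponent$. The plan is a permutation argument on strategies: given an $\Eruleset$-winning strategy, I would have $\proponent$ respond to $\opponent$'s out-of-order attacks in a $\Druleset$-dialogue by replaying the answers the $\Eruleset$-strategy would have produced had those attacks been made immediately, and then verify that the rearranged play is still legal and still a $\proponent$ win. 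The delicate points are that $\Dthirteen$ (each $\proponent$-assertion may be attacked at most once) keeps the stock of obligations finite, and that the permuted play still terminates without the copied responses generating new, unanswered attacks; checking that the $\Deleven$--$\Dthirteen$ bookkeeping survives the rearrangement is the crux of the argument.
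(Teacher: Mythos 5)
Your overall architecture---using a cut-free intuitionistic sequent calculus as the hub and translating between derivations and winning strategies by induction---is the same as the paper's, whose proof sketch says the argument ``goes by converting deductions in an intuitionistic sequent calculus to $\Druleset$-winning strategies (via tableaux), and vice versa.'' But there is a consequential difference of orientation. The paper attaches the calculus to the $\Druleset$ side: deductions are converted into \emph{$\Druleset$-winning strategies}. With that choice the three-way equivalence closes as a cycle: intuitionistic provability implies $\Dvalid{\phi}$; $\Dvalid{\phi}$ implies $\Evalid{\phi}$ by exactly your restriction argument (which is correct, modulo the small check that a play truncated because $\opponent$ runs out of $\Erule$-legal moves is still a $\proponent$-win under the paper's definition of winning); and $\Evalid{\phi}$ implies provability by extracting a derivation from the much more rigid $\Eruleset$-strategies. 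No strategy-to-strategy transformation from $\Eruleset$ to $\Druleset$ is ever performed. You instead anchor the calculus to $\Eruleset$, and are then forced to supply $\Evalid{\phi} \Rightarrow \Dvalid{\phi}$ by hand as a permutation argument on strategy trees.

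That forced leg is where your proposal has a genuine gap, and it is not a minor one: the naive replay argument you sketch is essentially the kind of argument whose gaps (in Lorenz's original treatment) Felscher's long and technical paper was written to repair. Concretely, when $\opponent$ defers an attack on an old $\proponent$-assertion, the order in which $\opponent$ concedes atoms changes; a $\proponent$-move copied from the $\Eruleset$-strategy may then assert an atom that $\opponent$ has not yet asserted at the corresponding point of the permuted play, violating $\Dten$. Likewise ``most recent open attack'' ($\Deleven$) and ``answered at most once'' ($\Dtwelve$) are properties of the global ordering of moves and are not preserved when moves are rearranged, and one must separately prove that the rearranged plays terminate. You correctly flag these as the delicate points, but the proposal offers no mechanism for handling them, and no simple replay scheme does. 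The repair is structural rather than technical: carry out your completeness induction so that it produces $\Druleset$-winning strategies directly from derivations---the derivation must supply $\proponent$'s answer to every $\Druleset$-legal $\opponent$-move, not merely the $\Erule$-constrained ones---and keep your restriction argument and your soundness direction ($\Eruleset$-strategies to derivations) unchanged. Then $\Evalid{\phi} \Rightarrow \Dvalid{\phi}$ follows by composing soundness with this strengthened completeness, which is precisely how the paper's cited proof closes the equivalence without any surgery on strategies.
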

The proof goes by converting deductions in an intuitionistic sequent
calculus to $\mathrm{D}$-winning strategies (via tableaux), and vice
versa.  Moreover, the ruleset $\CLruleset$ has the following
significance:
\begin{theorem}[Felscher]
  For all formulas $\phi$, we have that $\phi$ is a classical tautology iff
  $\CLvalid{\phi}$.
\end{theorem}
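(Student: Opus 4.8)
The plan is to mirror the proof of the preceding (intuitionistic) theorem, replacing the single-conclusion apparatus at every stage by its classical, multiple-conclusion counterpart. Concretely, I would relate $\CLruleset$-winning strategies for $\phi$ to cut-free derivations of the sequent ${} \Rightarrow \phi$ in a classical multiple-succedent sequent calculus (Gentzen's $\mathbf{LK}$), passing through classical (Beth-style) tableaux as an intermediate bookkeeping device, exactly as Felscher's intuitionistic argument passes through intuitionistic tableaux. Soundness (a winning strategy yields a derivation, hence a tautology) and completeness (a derivation yields a winning strategy) are then each established by a structural induction that translates dialogue moves into inference steps and back.

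The conceptual heart of the argument is that the two deleted rules, $\Deleven$ and $\Dtwelve$, are precisely the rules that in the intuitionistic game enforce a single ``defensive obligation'' at a time and thereby mirror the single-succedent restriction of intuitionistic sequents. Rule $\Deleven$ forces $\proponent$ to answer attacks in last-in-first-out order, so that at most one defense is live, while $\Dtwelve$ forbids re-answering an attack once defended; together they confine play to a single active formula on the Proponent side, just as an intuitionistic sequent carries a single formula on the right. Dropping both lets $\proponent$ keep several attacks open at once and return to defend (and redefend) any of them, which is exactly the freedom a multiple-succedent classical sequent grants. The example $p \vee \neg p$ in Table~\ref{excluded-middle-cl-dialogue}, where $\proponent$ revisits the attack of move $1$ at move $4$, is the game-theoretic shadow of the derivation ${} \Rightarrow p, \neg p$ in which both disjuncts survive in the succedent.

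I would carry out the two directions as follows. For soundness, read a $\CLruleset$-winning strategy tree top-down: each $\opponent$-attack corresponds to the rule decomposing the attacked formula, depositing the attacking statement into the antecedent, while each $\proponent$-defense or counterattack applies the rule dictated by the formula it addresses, with $\proponent$'s several undischarged defenses collected in the succedent. Rule $\Dten$ — that $\proponent$ may assert an atom only after $\opponent$ has — guarantees that the leaves close as axiomatic sequents $\Gamma, p \Rightarrow p, \Delta$, so the whole tree is a valid $\mathbf{LK}$-derivation and $\phi$ is a tautology. For completeness, given a cut-free derivation of ${} \Rightarrow \phi$, build a strategy by reading the derivation bottom-up, letting $\proponent$ respond to each $\opponent$-move by performing the inference the derivation prescribes; the key point is that whenever the derivation keeps several formulas in the succedent, the absence of $\Deleven$ and $\Dtwelve$ permits $\proponent$ to make exactly the required defense.

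The main obstacle I anticipate is the completeness direction, specifically the faithful simulation of a multiple-succedent derivation by a legal sequence of moves. Even with $\Deleven$ and $\Dtwelve$ gone, the game still constrains $\proponent$ by $\Dten$ and $\Dthirteen$, by the $\Erule$-rule (each $\opponent$-move reacts to the immediately preceding $\proponent$-move), and by the particle rules (an atom cannot be attacked, a negation admits no defending response); one must check that the order in which $\mathbf{LK}$ processes succedent formulas can always be realized by $\proponent$ without violating these remaining constraints, and that play terminates in a genuine $\proponent$-win rather than a stalled position. Maintaining, as an invariant throughout the play, the correspondence between ``open attacks on $\proponent$'' and ``formulas in the succedent'' is where the real work lies; the rest is routine case analysis over the five particle rules.
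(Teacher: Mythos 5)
Your plan follows essentially the same route as the paper: the paper itself gives no proof of this theorem (it explicitly defers to Felscher, remarking only---for the intuitionistic twin theorem---that the proof goes by converting sequent-calculus deductions to winning strategies via tableaux, and vice versa), and your proposal is precisely that method transposed to a classical multiple-succedent calculus, so in outline you are reconstructing the intended argument. One caution: your ``conceptual heart''---that deleting $\Deleven$ and $\Dtwelve$ is \emph{exactly} the game-theoretic counterpart of lifting the single-succedent restriction---holds only in the presence of rule $\Erule$, as the paper's own logic $\N$ (Section~\ref{sec:nearly}) shows: deleting $\Deleven$ and $\Dtwelve$ from $\Druleset$ alone yields a set of valid formulas that is not classical logic and not even closed under uniform substitution, so the role of $\Erule$, which you treat as a side constraint to be checked in the completeness direction, is in fact where the crux of the correspondence lies.
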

In other words, dropping $\Deleven$ and $\Dtwelve$ from the ruleset
$\Eruleset$ moves us from intuitionistic to classical logic.

\section{The composition problem}
\label{sec:composition-problem}

One can view dialogue games in two (compatible) ways.  These games can
be a kind of rational dialogue between two players, or they can be
viewed as a kind of logical calculus.  In this section we shall
describe a problem about dialogues that bears on them no matter which
view one takes about dialogues.

The statement of the problem does not depend on which viewpoint we
adopt:
\begin{problema}[Composition]
  Given a set $S$ of structural rules, determine whether
  $\setofvalid{S}$ is closed under modus ponens, that is, whether it
  is true that $\phi \in \setofvalid{S}$ and $\implies{\phi}{\psi} \in
  \setofvalid{S}$ implies $\psi \in \setofvalid{S}$.
\end{problema}
One approach to the composition problem is to simply give positive
solutions for each ruleset $S$ that one is
interested in.  A more unifying problem is available, though:
\begin{problema}[Uniform composition]
  Give criteria for a set $S$ of dialogue rules (perhaps coming from
  some delimited class of rulesets) such that modus ponens is admissible for
  $\setofvalid{S}$.
\end{problema}
Instead of focusing on particular rulesets, the uniform composition
problem asks for \emph{criteria} for a ruleset which, if satisfied for
any ruleset $S$, ensure that we have a positive solution to the
composition problem for $S$.

The qualifier ``(perhaps coming from some delimited class of
rulesets)'' in the statement of the uniform composition problem
permits one to restrict the range of rulesets of interest (e.g., such
as those coming from various dialogical characterizations of modal
logic~\cite{sep-dialogical-logic}).  A totally general solution to the
uniform composition problem seems to be out of the question, putting
aside the question of what a ``dialogue rule'' in general is, which
makes it unclear over what the problem quantifies.

We now consider this problem from the two points of view about dialogues.

\subsection{Dialogues as rational interaction}
\label{sec:dial-as-rati}

If dialogues are to be for a (stylized) kind of rational interaction,
then one ought to have a criterion according to which one can say that
certain (sets of) dialogue rules support or undermine the rational
behavior of the players.

From results like Felscher's we can see that there must be a positive
solution to the composition problem: since intuitionistic logic is
actually a logic, if $\proponent$ has winning strategies for $\phi$ and
$\implies{\phi}{\psi}$, then $\proponent$ must have a winning strategy
for $\psi$.

Thus, by singling out the composition problem, we are not necessarily
raising a genuinely new problem about dialogue games, at least not in
all cases, where correspondence results are known, such as for
intuitionistic and classical first-order logic, modal logics, and so
forth.  Rather, we are proposing a problem with a change of emphasis:
rather than solving the composition problem as a corollary of
considerably stronger results, we raise the following challenge for
dialogue games: \emph{if a set of dialogue rules $S$ is supposed to
  actually be a coherent logic, we would like to have a direct proof
  of this fact; it should, ideally, be possible to give a direct
  solution of the composition problem for $S$ before a technically
  complex correspondence is established between the set of formulas
  for which $\proponent$ has a winning strategy and the set of known
  validities.}

Another way to approach the composition problem: if dialogue games
based on a set $S$ of dialogue rules are supposed to be an
\emph{autonomous} foundation for some kind of logic $L$, then it
should be possible to solve the composition problem for $S$ without
reference to whatever ``machinery'' for $L$ has been built up outside
of the dialogical approach.

What do we mean by ``rational''?  Various senses are available, for a
ruleset~$S$:
\begin{itemize}
\item An $S$-strategy for a formula $\phi$ should correspond to
  some conclusive reasoning for $\phi$;
\item if $\proponent$ has an $S$-winning strategy for $\phi$, then $\proponent$
  does not have an $S$-winning strategy for $\neg\phi$;
\item if $\proponent$ has an $S$-winning strategy for $\phi$, then
  Opponent does not also have an $S$-winning strategy for $\phi$;
\item if $\proponent$ has $S$-winning strategies for $\phi$ and
  $\implies{\phi}{\psi}$, then $\proponent$ has an $S$-winning strategy
  for $\psi$.
\end{itemize}
The fourth explication of $S$-strategy rationality is simply the same as
the composition problem for $S$.

We can further distinguish two loci of rationality: games and strategies.
\begin{definition}[Game rationality] A ruleset $S$ is \emph{game-rational} if the
  development of $S$-dialogue games should have the form of a rational
  conversation between two opposing players.
\end{definition}

\begin{definition}[Strategy rationality]
  A ruleset $S$ is \emph{strategy-rational} if $S$-winning strategies
  constitute some kind of rational argument.
\end{definition}

One way to deflate the composition problem is to acknowledge that
dialogue games are not in fact supposed to be an autonomous foundation
of capturing validity in a logic.  (One might even wonder what it
means to be an \emph{autonomous} foundation for a logic.)  Or we are
to drop the requirement about game-rationality or strategy-rationality
for dialogue rulesets.  And it would seem that neither of these
desiderata can really be abandoned, if one wishes to see dialogue
games as more than a mere calculus and having something to do with
``rational dialogue''.  It seems we lack a compelling account of the
rationality of dialogues, in the sense that we lack a defense of
certain sets of dialogue rules over others.\footnote{Woods has
  highlighted another problem concerning the rationality of dialogue
  games, different from ours, which is related to the problem of
  logical omniscience~\cite{woods1988}.  Walton also sketches some
  problems of rationality in dialogues~\cite{walton1985}.}  If one
views dialogues as simply alternative calculi for working with
different logics, then one might still be persuaded by our call for
``direct'' solutions to the composition problem.  This point of view
is taken up in the next subsection.

\subsection{Dialogues as calculi}
\label{sec:dialogues-as-calculi}

Apart from treating dialogue games as a stylized debate or rational
interaction between two opposing players, one can view these games as
a logical calculus on a par with other formalisms for proofs such as
Hilbert-style, natural deduction, tableaux, or sequent calculus.
(These two points of view are, of course, compatible.)  From this
point of view, the composition problem for a ruleset $S$ is the
problem of showing that modus ponens is an admissible rule of
inference for $\setofvalid{S}$, the set of all formulas $\phi$ for
which $\proponent$ has an $S$-winning strategy for $\phi$.

One way to view the problem is that we have a handful of positive
results: for a certain very limited number of dialogue rulesets, we
know about them that they correspond to certain logics (and hence
positively solve their associated composition problems).  We may view
these positive results as local maxima in a space populated by logics
and non-logics alike.  We wish to understand what happens when we step
away from these local maxima in this space.  Certainly, some
curiosities will result (see section~4.1 for an example).  The
perspective behind the uniform composition problem is to embrace these
non-maxima (or perhaps even discovering new maxima) in the hopes of
understanding the whole space: let us shift from a (very) discrete
point of view to a ``continuous'' point of view, to see what the
dialogical space is like.

One can evidently point to theorems such as Felscher's to dispense
with the composition problem for the rulesets $\Druleset$ and
$\Eruleset$.  However, Felscher's theorem does not, prima facie, solve
the uniform composition problem.  Some positive results bearing on the
uniform composition problem are those of
Fermüller~\cite{fermueller2003}, who, using so-called parallel
dialogue games, gives dialogical characterizations of a variety of
intermediate logics.  We shall return to Fermüller's results later, in
section \ref{attempted}.

We are also interested in the question of to what extent dialogue
games actually offer a fine-grained division of different kinds of
logics.  If it turns out that only a handful of sets of dialogue rules
are adequate for the purpose of generating a logic (i.e., for
capturing some minimally rational meaning of a dialogue game), then
this needs to be explained.  That is, if it turns out that there is
something unique about the standard sets of dialogue rules that have
heretofore been investigated, then this serves as a critical point for
the dialogue approach, because it shows that its apparent opportunity
for logical generality is in fact highly constrained and tightly
delimited.


\subsection{Direct solutions to the composition problem}\label{sec:direct-solutions}

A positive solution to the composition problem for a set of dialogue
rules tells us that our rule set gives rise to a logic, at least in a
weak sense of the term ``logic''. Of course, we are likely not
interested in the case where all formulas are valid, in which case the
composition problem is trivially solved in the affirmative.

We have indicated that we prefer direct solutions to the composition
problem.  We can certainly bring to bear whatever means we have toward
establishing significant properties of a dialogical logic.  But if a
``direct'' solution to a problem is available, it seems reasonable to
provide one alongside whatever other methods we have.  The problem is
simply that we wish to have multiple proofs.  Whether a direct proof
that operates on winning strategies is ``the same'' as a proof of the
same result using some other methods is not always clear.  Even if a
positive solution to the composition problem is ``really'' a disguised
version of cut elimination, there may still be value in working
directly with strategies rather with, say, sequent calculi
derivations, since we don't need to first do the work of showing that
the sequent calculi really captures the strategies.

Direct solutions to dialogical problems may be the only solutions, if
one is exploring a logic whose relation to other, differently
characterized logics is unknown.  A positive, direct solution to the
composition problem for such a logic is given in~\cite{AU} (see also Section~\ref{sec:vary-dial-rules}).

To illustrate further what we have in mind by ``direct'' solutions to
dialogical problems, we now give a positive solution to the
composition problem for a dialogical characterization of classical
propositional logic.  We will first present a set $\CLruleset$ of dialogue
rules, and then we will show that $\CLvalid{\phi \rightarrow \psi}$
and $\CLvalid{\phi}$ implies $\CLvalid{\psi}$ by working with
$\CLruleset$-winning strategies.  We will not show that $\CLruleset$
captures $\CL$; see~\cite{felscher1985}.

\begin{lemma}\label{no-atom-cl-valid}
  No atom is $\CLruleset$-valid.
\end{lemma}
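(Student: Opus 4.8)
The plan is to derive the result directly from the structural rule $\Dten$, which belongs to $\CLruleset$ because $\CLruleset = \Eruleset - \{\Deleven, \Dtwelve\}$ and $\Eruleset$ retains every rule of $\Druleset$ except those two. First I would fix an arbitrary atom $p$ and observe that any $\CLruleset$-dialogue for $p$ can begin in only one way, namely with $\proponent$'s initial assertion of $p$ at move $0$; there is no alternative opening available to $\proponent$.

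The central step is then to notice that this opening assertion is itself subject to $\Dten$, which permits $\proponent$ to assert an atomic formula only after $\opponent$ has already asserted it. At move $0$ the $\opponent$ has asserted nothing at all, so $\proponent$'s opening with $p$ is not legal. Since an $\CLruleset$-dialogue for $p$ is by definition a dialogue that commences with $p$ \emph{and} adheres to the rules of $\CLruleset$, it follows that no such dialogue exists; a fortiori $\proponent$ has no $\CLruleset$-winning strategy for $p$, and therefore $p \notin \setofvalid{\CLruleset}$.

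The step that will require the most care is pinning down the status of the initial move relative to the structural rules. The argument depends essentially on reading $\Dten$ as a constraint on every $\proponent$-assertion, the thesis included, and not merely on $\proponent$'s later responses. I would make this scope explicit at the outset, since it is exactly what rules out the degenerate alternative: were the thesis exempt from $\Dten$, then because the particle rules provide no attack against an atom, $\opponent$ would have no available move after move $0$, and the bare one-move dialogue would spuriously satisfy the stated winning condition for $\proponent$. Observing that $\Dten$ is precisely what blocks this reading also clarifies that the passage from $\Eruleset$ to $\CLruleset$ (the removal of $\Deleven$ and $\Dtwelve$) plays no role here; the same argument shows equally that no atom is $\Druleset$-valid or $\Eruleset$-valid.
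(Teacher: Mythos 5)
Your proof is correct and takes essentially the same approach as the paper, whose one-line argument is that $\CLruleset$ (via $\Dten$) ``does not permit a game to even get started'' with $\proponent$ asserting an atom. Your explicit discussion of $\Dten$'s scope covering the initial move, and your remark that the argument applies to any ruleset containing $\Dten$ (hence to $\Druleset$ and $\Eruleset$ as well), simply spell out what the paper states immediately after its proof.
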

\begin{proof}
  The $\CLruleset$ does not permit a game to even get started with the
  assertion by $\proponent$ of an atom.
\end{proof}
Such a result obviously holds for any set of dialogue rules that
contains $\Dten$.

The next lemma is a kind of consistency result of classical logic,
construed dialogically.
\begin{lemma}[No explosion]\label{no-explosion}
  There is no $\CLruleset$-valid formula $\phi$ with the property that
  $\CLvalid{\phi \rightarrow \psi}$ for all formulas $\psi$.
\end{lemma}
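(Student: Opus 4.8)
The plan is to derive a contradiction from the existence of such a $\phi$.  Suppose toward contradiction that $\phi$ is $\CLruleset$-valid and that $\CLvalid{\implies{\phi}{\psi}}$ holds for \emph{every} formula $\psi$.  The key idea is to instantiate $\psi$ with a concrete formula that we already know is $\CLruleset$-invalid, and then to use (a classical version of) the composition property to get a contradiction: composition would force that invalid formula to be valid.  So first I would pin down a witness formula that is provably \emph{not} $\CLruleset$-valid.  By Lemma~\ref{no-atom-cl-valid} no atom is $\CLruleset$-valid, so taking $\psi$ to be a fresh atom $p$ gives us a canonical invalid target.

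Next I would apply modus ponens at the level of winning strategies.  Since $\CLruleset$ captures classical propositional logic, the set $\setofvalid{\CLruleset}$ is closed under modus ponens (this is exactly the composition property that the paper is in the business of establishing directly for $\CLruleset$; one may take it as available here, either from the direct solution the section is building toward or from Felscher's correspondence between $\CLruleset$-validity and classical tautologousness).  From $\CLvalid{\phi}$ and $\CLvalid{\implies{\phi}{p}}$ we then conclude $\CLvalid{p}$.  But this contradicts Lemma~\ref{no-atom-cl-valid}, which says precisely that $p$ is not $\CLruleset$-valid.  Hence no such explosive $\phi$ can exist.

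The main obstacle is a question of \emph{what may legitimately be assumed}.  If this ``no explosion'' lemma is meant as a stepping stone toward the composition result for $\CLruleset$, then invoking closure under modus ponens would be circular, and the argument must instead work directly with $\CLruleset$-winning strategies: given strategies for $\phi$ and for each $\implies{\phi}{\psi}$, one must exhibit by hand a strategy for $\psi$ (equivalently, show directly that the strategy for $\implies{\phi}{p}$ can be combined with the strategy for $\phi$ to force $\proponent$ to assert and successfully defend $p$, which the particle and structural rules forbid once $p$ is atomic and unasserted by $\opponent$).  If instead composition is permitted as already-known (via Felscher), the proof is the short instantiation argument above.  I would write it in the latter, lightweight form, flagging the reliance on composition, since the surrounding text explicitly cites Felscher for the fact that $\CLruleset$ captures $\CL$ and therefore treats closure under modus ponens as a genuine feature of $\setofvalid{\CLruleset}$.
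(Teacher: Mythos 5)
Your proposal is correct and takes essentially the same route as the paper: the paper's own justification (given in a parenthetical remark rather than a formal proof) is exactly your argument, namely instantiating $\psi$ at an atom and using closure of $\setofvalid{\CLruleset}$ under modus ponens (available through Felscher's correspondence with classical tautologies) to contradict Lemma~\ref{no-atom-cl-valid}. Your caveat about circularity is also precisely the paper's point: it explicitly concedes that no \emph{direct}, strategy-level proof of this lemma is known, which is why it is presented as a desideratum for the direct composition argument rather than proved that way.
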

Such a formula gives rise to an ``explosion'' in the sense that it
entails (in the object language) all formulas.  If there were such a
formula $\phi$, we would have, for example, that $\CLvalid{\phi
  \rightarrow p}$, even for atoms $p$ that do not occur in $\phi$.
Such a case is clearly untenable.  We have not yet been able to find a
direct proof of this lemma, but it does seem to us to be an important
step toward a direct proof of the composition problem for
$\CLruleset$.  (Note that, by closure of $\CLruleset$-valid formulas
under modus ponens, such an ``explosion'' formula does not exist,
since it would imply, as we said, that all atoms would be valid, which
of course violates Lemma~\ref{no-atom-cl-valid}.)  Such a problem can
clearly be solved quite easily using the truth table notion of
classical validity.  Less easy is a proof-theoretic solution to the
problem using a sequent calculi adequate for classical logic; the
solution apparently requires cut elimination.


\begin{theorem}[Attack-first]
  If $\CLvalid{\phi}$, then there is a $\CLruleset$-winning strategy
  for $\phi$ in which $\proponent$'s defenses are delayed as far as
  possible.
\end{theorem}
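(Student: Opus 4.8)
The plan is to start from an arbitrary $\CLruleset$-winning strategy for $\phi$ --- one exists because $\CLvalid{\phi}$ --- and to transform it, step by step, into one that never defends while an attack is available. First I would fix terminology: say that a $\proponent$-move is \emph{premature} if the strategy prescribes a defense at a position at which $\proponent$ also has a legal attack, and call a strategy \emph{attack-first} when it prescribes no premature move. The theorem is then the assertion that some $\CLruleset$-winning strategy is attack-first, and the natural route is a deferral argument: take the earliest premature defense $d$ in the strategy, replace it by an available attack $a$, and postpone $d$ to a later stage, repeating until no premature move remains.

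The point at which the ruleset does the essential work is the legality of postponing $d$. Because $\Deleven$ has been dropped from $\CLruleset$, $\proponent$ is under no obligation to answer the most recent open attack, so no rule compels $d$ to be played now; and because $\Dtwelve$ has been dropped, an attack may be answered more than once, so the open $\opponent$-attack that $d$ was to answer remains answerable after any number of intervening moves. Hence $d$ survives as a legal move arbitrarily far downstream. This is precisely the feature that separates $\CLruleset$ from the full $\Eruleset$: under $\Deleven$ and $\Dtwelve$ the defense $d$ could expire, and indeed an attack-first normal form fails for intuitionistic dialogues.

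To show that winning is preserved I would set up a simulation matching each position of the transformed play to a position of the original so that $\opponent$ has asserted at least as much in the former. This domination is what makes the extra attacks pay off: by $\Dten$ a larger stock of $\opponent$-asserted atoms can only widen $\proponent$'s later options, and the deferred defenses, being still legal, can be replayed from the original strategy once $\proponent$ has exhausted its attacks and is forced to defend. The delicate point is that attacking is not free of cost. When the attacked formula is a disjunction, $\opponent$'s choice of disjunct branches the play and both branches must be dominated; worse, when it is a negation or an implication, attacking obliges $\proponent$ to assert the relevant subformula, which $\opponent$ may then counterattack, creating a fresh $\proponent$-obligation that the original play never incurred. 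The simulation must therefore track these extra obligations and exhibit, for each, a discharging move --- again supplied, ultimately, by the original winning strategy --- rather than merely claiming that $\proponent$'s obligations do not grow.

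Controlling those extra obligations, and simultaneously proving termination, is where I expect the real work to lie. Termination is the easier half: every statement in a $\CLruleset$-dialogue is a subformula of $\phi$, and $\Dthirteen$ bounds the number of $\opponent$-attacks, so the additional attacks cannot be iterated without limit and a well-founded measure (for instance a count of the premature defenses still present, or the depth of the earliest one) can be arranged to decrease at each deferral. The genuine obstacle is to show that the obligations spawned by attacking negations and implications are all ultimately dischargeable, with the $\Erule$-rule dictating $\opponent$'s forced replies and $\Dten$ governing $\proponent$'s atomic assertions, so that every deferred defense can be reinserted exactly where a winning continuation demands it. Once this is established, iterating the deferral halts, by the termination measure, in an attack-first $\CLruleset$-winning strategy for $\phi$, as desired.
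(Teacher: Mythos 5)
Your proposal is a reduction-relation argument: repeatedly defer the earliest ``premature'' defense, justify the legality of the deferral by the absence of $\Deleven$ and $\Dtwelve$ from $\CLruleset$, and preserve winning via a simulation in which $\opponent$'s concessions in the transformed play dominate those in the original. The legality analysis is sound, and the termination measure is plausible. But the proposal is not a proof: you yourself flag the preservation-of-winning step---showing that the obligations spawned when $\proponent$'s new attacks hit negations and implications (forcing $\proponent$ to assert subformulas that $\opponent$ may counterattack) are all ultimately dischargeable---as where the real work lies, and then you simply assume it (``Once this is established \dots''). That step is the entire content of a transformation argument; without it you have a plan, not an argument. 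In particular, your invariant (``$\opponent$ has asserted at least as much'') is not obviously maintainable, precisely because the new attacks change which formulas $\proponent$ has asserted, hence which attacks $\Dthirteen$ and the particle rules make available to $\opponent$; nothing you say rules out that $\opponent$'s reply to a new attack drags the play outside the tree of the original strategy in a way that no deferred defense can repair.

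You should also know that the paper's own proof takes the opposite route and explicitly disavows yours. It performs no transformation at all: it simply considers the class of winning strategies in which $\proponent$ defends only when no attack is legally available, and asserts that such winning strategies exist (illustrating with the degenerate case of a unique winning strategy, which it claims satisfies the condition vacuously). Immediately after the theorem, the paper remarks that, unlike analogous results proved via cut elimination, these normal forms require \emph{no} reduction relation and \emph{no} normalization proof. So the approach you chose is the one the paper deliberately sidesteps---arguably because completing it is hard, which is exactly the difficulty you ran into. To be fair, the paper's own existence claim is itself thin (uniqueness of a winning strategy only rules out winning alternatives, not legal ones, so the vacuity argument does not quite work as stated); but judged as a proof of the theorem, your attempt is incomplete at its central step, whereas the intended argument never takes on that burden in the first place.
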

\begin{proof}
  The idea is that we consider $\CLruleset$-winning strategies $\tau$
  that have the property that, for each branch $b$ of $\tau$, and each
  $\proponent$ move $m$ of $b$, if $m$ is a defense, then at $m$ it is
  not possible for $\proponent$ to attack.  That is, we consider
  $\CLruleset$-winning strategies where $\proponent$ must defend; if
  $\proponent$ can attack, then he does.

  The existence of such $\CLruleset$-winning strategies is clear.  If
  there is a $\CLruleset$-winning strategy for $\phi$, but only one,
  then it satisfies the attack-first condition because $\proponent$
  has no alternatives available to him.  For a $\CLruleset$-valid
  formula $\phi$, there could even be multiple such strategies.
\end{proof}
We can refine the attack-first strategy further by requiring that, if
no attacks but multiple defenses are available for $\proponent$, then
we require that $\proponent$ defend against the most recent attack.
\begin{theorem}[Attack-first-defend-most-recent]
  IF $\CLvalid{\phi}$, then there exists a $\CLruleset$-winning
  strategy in which $\proponent$'s defenses are delayed as far as
  possible, and in which, if multiple defenses are possible for
  $\proponent$, then the defense against the most recent attack is
  chosen.
\end{theorem}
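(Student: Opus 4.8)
The plan is to start from the Attack-first theorem and impose the additional most-recent discipline by a strategy-transformation argument. Let $\tau$ be a $\CLruleset$-winning strategy for $\phi$ in which $\proponent$ defends only when no attack is available; such a $\tau$ exists by that theorem. Because every branch of a winning strategy ends in a win for $\proponent$ and only finitely many moves are ever available at a node, $\tau$ is a finite tree, so I may transform it by eliminating, one at a time, the \emph{violations}: defense-nodes at which $\proponent$ answers an open attack that is not the most recent defensible one.

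First I would fix the bookkeeping. At a $\proponent$-node, call an $\opponent$-attack \emph{defensible} if the particle rules supply a defense for it (so attacks on negations are excluded) and that defense is legal under $\CLruleset$---in particular, by $\Dten$, if the defense is an atom then $\opponent$ has already asserted it. Since $\Deleven$ and $\Dtwelve$ are absent from $\CLruleset$, $\proponent$ may answer any defensible attack, even one already answered. The goal is a winning strategy with no violations, obtained by repeatedly performing \emph{surgery}: at a violating node $n$ where $\tau$ defends a defensible attack $a$ while a strictly more recent defensible $a'$ is available, I replace this move by the defense of $a'$ and rebuild the subtree below $n$. Termination would follow from a suitable well-founded measure on the finitely many violating nodes, so that the only step requiring real justification is the correctness of a single surgery.

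That correctness is the crux, and I would isolate it as an \emph{exchange lemma}: if at $n$ both $a$ and a more recent $a'$ are open and defensible, then defending $a'$ at $n$ also leads to a win for $\proponent$. The leverage comes from the $\Erule$-rule: $\opponent$'s reply is forced onto the single immediately preceding $\proponent$-statement, so defending $a'$ (asserting its prescribed defense $F'$) spawns a subgame whose $\opponent$-moves depend only on $F'$ and not on whether $a$ has been answered. The subgames opened by $a$ and by $a'$ are in this sense independent, and since $\CLruleset$ permits $a$ to be answered later---indeed repeatedly---I would graft the part of $\tau$ that follows its defense of $a$ onto the new branch at the first subsequent point where $\proponent$ must defend.

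The hard part will be this exchange lemma, for two linked reasons. First, the winning condition does not oblige $\proponent$ to answer every open attack---he wins the instant $\opponent$ is stuck---so the surgery is not a clean transposition of two defenses that both occur, but a change of a defense's \emph{target}, after which I must show that the postponed $a$ can still be handled, or safely left open, without handing $\opponent$ a fresh move. Second is the interaction with $\Dten$: defending $a'$ first makes $\opponent$ react to $F'$, altering which atoms $\opponent$ has asserted and which $\proponent$-assertions remain open to attack, and I must verify that this can only \emph{enlarge} $\proponent$'s later options, so that every $\opponent$-reply in the reordered play was already answerable in $\tau$. Establishing this monotonicity, and checking that the grafted branches still respect $\Dten$ and $\Dthirteen$ throughout, is where the real work lies; granting it, well-foundedness of the measure delivers the theorem.
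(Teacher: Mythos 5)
Your route is genuinely different from the paper's: the paper disposes of this theorem in a single sentence (``the existence of such strategies \dots is again clear''), and indeed goes out of its way to remark that these dialogical normal forms, unlike cut-elimination arguments, require \emph{no} reduction relation and no proof of normalization---which is precisely the machinery you build. That difference would be welcome (a real argument where the paper offers only an assertion), except that your argument is not actually carried through: everything rests on the exchange lemma, which you state, correctly call the crux, and then assume (``granting it \dots delivers the theorem''). Since the exchange lemma---that at any node an older defensible attack can be traded for the most recent one without losing winnability---is essentially the entire content of the theorem, the proposal as written reduces the theorem to itself, plus a termination claim that is itself not free (each surgery rebuilds a subtree and can thereby create new violating nodes; no measure is exhibited).

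Moreover, the sketch you do give for the exchange lemma has a concrete hole. After the surgery at node $n$, $\proponent$ asserts the defense $F'$ of $a'$, and by rule $\Erule$, $\opponent$ must react to $F'$; a winning strategy must answer \emph{all} such reactions, and those answers have to come from somewhere. If $\tau$ never defends $a'$ below $n$, there is nothing in $\tau$ to graft at this point. If $\tau$ does defend $a'$ at some deeper node $m$, the subtree below $m$ is played in a richer position: between $n$ and $m$, $\opponent$ has conceded additional atoms, and the continuation below $m$ may use exactly those concessions, so transplanting it up to $n$ can violate $\Dten$. Hence the monotonicity you invoke (``this can only enlarge $\proponent$'s later options'') runs the right way only for the \emph{postponed} defense of $a$, which moves later in the play; for the \emph{promoted} defense of $a'$ it runs backwards, since moving a defense earlier shrinks the stock of $\opponent$-asserted atoms available under $\Dten$. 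This is the step a correct proof must supply, presumably by an induction on positions or on formula complexity rather than by grafting. As a minor point, your worry about re-checking $\Dthirteen$ on the rebuilt branches is vacuous: as the paper notes in its discussion of independent rulesets, rule $\Erule$ already implies $\Dthirteen$.
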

The existence of such strategies for $\CLruleset$-valid formulas is
again clear.

Note that, unlike proofs of analogous results via cut elimination,
these ``normal forms'' for dialogues do not require the definition of
a reduction relation and a proof that it is normalizing; the existence
of $\CLruleset$-winning strategies adhering to these conditions is
clear.

We have so far not been able to find a ``direct'' proof of
Lemma~\ref{no-explosion}.  Such a result must hold, since, thanks to
Felscher's and other dialogical characterization of classical logic
(e.g.,~\cite{SU}), we have $\CLvalid{\phi}$ iff $\phi$ is a classical
tautology.  From the perspective of truth tables, such a statement
clearly holds: a $\phi$ with this property would be a contradiction
such as $\perp$ or $p \wedge \neg p$, but such statements are not
valid.  The following is an outline of a proof using ``direct''
methods, using the ideas developed so far.
\begin{theorem}
  If $\CLvalid{\phi}$ and $\CLvalid{\phi \rightarrow \psi}$, then $\CLvalid{\psi}$.
\end{theorem}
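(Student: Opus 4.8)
The plan is to give a \emph{cut-style} construction that pastes a winning strategy for $\phi$ into a winning strategy for $\implies{\phi}{\psi}$, hiding the cut formula $\phi$, so as to produce a winning strategy for $\psi$. Fix a $\CLruleset$-winning strategy $\sigma$ for $\phi$ and a $\CLruleset$-winning strategy $\rho$ for $\implies{\phi}{\psi}$, both put into the attack-first-defend-most-recent normal form justified above; this normalization keeps the branching of the two strategies under control and makes the interface between them easier to describe. The opening moves of $\rho$ are forced: $\proponent$ asserts $\implies{\phi}{\psi}$, $\opponent$'s only $\Erule$-legal reaction is to attack by asserting $\phi$, and $\proponent$ answers with $\psi$. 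From this point on the remaining play of $\rho$ is, intuitively, a game in which $\proponent$ defends $\psi$ while enjoying the extra resource that $\opponent$ has committed to $\phi$.

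First I would define the target strategy $\tau$ for $\psi$ so that $\proponent$ opens with $\psi$ and thereafter replays $\rho$ \emph{from its move~$2$ onward}, identifying the opening assertion of $\psi$ in $\tau$ with the move-$2$ assertion of $\psi$ in $\rho$. The only moves of $\rho$ with no counterpart in the bare $\psi$-game are those of the $\phi$-subdialogue, namely $\proponent$'s attacks on $\opponent$'s assertion of $\phi$ and $\opponent$'s defenses of it. These I would \emph{hide}: whenever $\rho$ tells $\proponent$ to attack $\phi$, I feed that attack, with the roles of the two players exchanged, to $\sigma$ as an $\opponent$-attack on $\proponent$'s assertion of $\phi$, and I copy $\sigma$'s prescribed defense back into the $\rho$-play as the response $\opponent$ ``would have made''. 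Because $\sigma$ is winning, its plays are finite and $\proponent$ is never left without a response, so the hidden interaction always terminates without leaving $\proponent$ stuck. The externalized play exposes only the $\psi$-moves.

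The bulk of the work is then to verify that every external play of $\tau$ is a legal $\CLruleset$-dialogue for $\psi$ and that $\tau$ is winning. Legality of $\Dthirteen$ is inherited from $\rho$, since hiding adds no $\opponent$-attacks and so no $\proponent$-assertion acquires a second attacker; the winning condition follows by combining the fact that $\rho$'s maximal plays leave $\opponent$ with no move against the last $\proponent$-statement with the termination of the hidden $\sigma$-interaction. I expect the hard part to be the atomic rule $\Dten$: an atom that $\proponent$ asserts in the $\psi$-game may, in $\rho$, have been licensed by $\opponent$'s defense of $\phi$ rather than by an $\opponent$-attack on $\psi$, and after hiding that licensing assertion is no longer visible externally. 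The claim to establish is a \emph{grounding} property of the copycat: $\sigma$ never emits an atom before its own $\opponent$ has conceded it, and along the $\phi$-channel every such concession is the image of a genuine external $\opponent$-move, so that each atom $\proponent$ plays externally is one $\opponent$ has already asserted externally. Checking this, together with the fact that hiding the internal $\phi$-moves does not disturb the adjacency required by $\Erule$ (that $\opponent$ always reacts to the immediately preceding $\proponent$-statement), is where the real content of the argument lies.

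A final caveat concerns the degenerate case in which $\psi$ is atomic. Then $\setofvalid{\CLruleset}$ cannot contain $\psi$ at all, by Lemma~\ref{no-atom-cl-valid}, so the theorem in this case amounts to the assertion that its hypotheses are jointly unsatisfiable --- precisely the content of the (dialogically) not-yet-established Lemma~\ref{no-explosion}. Thus the construction above is meant to yield the result directly whenever $\psi$ is compound, while the atomic case coincides with the no-explosion gap flagged earlier; closing it by purely strategy-theoretic means remains the principal obstacle to a fully direct proof.
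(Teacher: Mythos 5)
Your construction is genuinely different from the paper's: the paper's own (explicitly incomplete) argument proceeds by a case analysis on the single strategy $d$ for $\implies{\phi}{\psi}$ --- if $\proponent$ never defends against the initial attack, then $\phi$ would contradict Lemma~\ref{no-explosion}; if $\proponent$ never attacks $\opponent$'s assertion of $\phi$, delete the first two moves; and the general case is left to a hoped-for normal-form rewriting. You instead compose the \emph{two} given strategies by a copycat-and-hiding construction in the style of cut elimination in game semantics, which is an attractive and more symmetric idea, and your identification of $\Dten$-grounding and $\Erule$-adjacency as the delicate verifications is accurate as far as it goes. (One presentational slip: after you normalize $\rho$ to attack-first form, its move $2$ is an attack on $\phi$ whenever $\phi$ is compound, \emph{not} the assertion of $\psi$, so the identification of $\tau$'s opening with ``the move-$2$ assertion of $\psi$ in $\rho$'' needs restating; the external $\psi$-defense occurs later, if at all.)

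The genuine gap occurs earlier than the verifications you flag: the role exchange at the $\phi$-interface is not legitimate, because the rules of $\CLruleset$ are asymmetric between the players. The strategy $\sigma$ is only guaranteed to have (winning) responses against \emph{legal} $\opponent$-play in its own game, where legality includes $\Erule$ ($\opponent$ reacts only to the immediately preceding $\proponent$-statement) and $\Dthirteen$ (a $\proponent$-assertion is attacked at most once). But the attacks on $\phi$ prescribed by $\rho$ live in a game where $\phi$ is an \emph{Opponent} assertion: there $\proponent$ is not constrained by $\Erule$ at all, and $\Dthirteen$ does not protect $\opponent$'s assertions, so $\rho$ may attack $\opponent$'s $\phi$ at an arbitrary later moment and may attack it several times. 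Fed through your role swap, these become $\opponent$-moves in $\sigma$'s game that violate $\Erule$ and $\Dthirteen$, and $\sigma$ simply has no prescribed answer: the ``hidden interaction'' is undefined. The same asymmetry bites in the reverse direction, since $\sigma$'s replies, reinterpreted as $\opponent$-moves of $\rho$'s game, need not obey $\Erule$ there; consequently your termination claim and your $\Dten$-grounding induction, both of which presuppose that the simulated plays are legal plays of the respective strategies, cannot be invoked as stated. Repairing this requires a preliminary lemma normalizing winning strategies so that the traffic across the interface is legal on both sides --- which is essentially the missing ``structure theory'' the paper is calling for. Your final paragraph on atomic $\psi$ is not an additional gap: it leans on the unproved Lemma~\ref{no-explosion} exactly as the paper's own first case does.
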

\begin{proof}[Sketch]
  Consider a $\CLruleset$-winning strategy $d$ for $\phi \rightarrow
  \psi$.  It begins with the assertion by $\proponent$ of $\phi
  \rightarrow \psi$, then an attack by $\opponent$ on this
  implication, asserting $\phi$.  The beginning is shown in
  Table~\ref{winning-strat-for-implication}.
  \begin{table}[t]
    \centering
    \setlength{\tabcolsep}{5pt}
    \begin{tabular}{rcll}
      \initialmove{$\phi \rightarrow \psi$}\\
      \orow{1}{$\phi$}{A}{0}\\
    \end{tabular}
    \medskip
    \caption{\label{winning-strat-for-implication}Beginning of every $\CLruleset$-winning strategy for $\phi \rightarrow \psi$}
  \end{table}
  We do not know what the next step is; $\proponent$ could attack
  $\phi$ or defend against the initial assertion of $\psi$.  There are
  three possibilities:
  \begin{itemize}
  \item $\proponent$ never defends against the initial attack on $\phi
    \rightarrow \psi$.  In this case, evidently it makes no difference
    what $\psi$ is, so by simply changing the first step of $d$ from
    $\phi \rightarrow \psi$ to $\phi \rightarrow \chi$, we have a
    $\CLruleset$-winning strategy for $\phi \rightarrow \psi$, no
    matter what $\psi$ is.  $\phi$ would thus a counterexample to Lemma~\ref{no-explosion}.
  \item $\proponent$ never attacks $\opponent$'s assertion of $\phi$.
    Then $d$ is evidently already a $\CL$-winning strategy for $\psi$,
    provided we simply delete the initial two moves.
  \item $\proponent$ does defend against the initial attack on $\phi
    \rightarrow \psi$.  This is the general case, and likely the most
    difficult.  The main idea is to look for a suitable rewriting, or
    normal form, of the strategy into one from which we can extract a
    $\CLruleset$-winning strategy for $\phi \rightarrow \psi$.  It
    seems plausible that the \emph{defend last} normal form defined
    earlier would be helpful.  By adhering to that normal form, we
    defer $\proponent$'s defense against the initial attack as long as
    possible, forcing $\opponent$ to make the greatest number of
    commitments (viz., assert the most atoms) before coming to the
    defense against the initial attack.
  \end{itemize}
\end{proof}
We have targeted $\CL$ here and a dialogical characterization
($\CLruleset$) of it because $\CLruleset$ is somewhat
relaxed compared to the rulesets $\Druleset$ and
$\Eruleset$, which are known to be adequate for intuitionistic logic.  That $\IL$
is closed under modus ponens is, of course, obvious.  It is not clear
to us whether the strategy normal forms that we have proposed (namely,
the attack-first and its refinement, attack-first-defend-latest forms)
have the same significance in the presence of rules $\Deleven$ and
$\Dtwelve$ as they do when these two rules are missing (which is the
case for the standard dialogical characterization of $\CL$).

\section{Varying dialogue rules}
\label{sec:vary-dial-rules}

To illustrate our approach, let us look at some examples where one
varies the rulesets.  This section reports on three such experiments.

\subsection{Nearly classical logic}\label{sec:nearly}

We have stated earlier that Felscher's theorem shows the
correspondence between the $\Druleset$ and $\Eruleset$ rulesets and
intuitionistic logic $\IL$.  Since $\IL$ is closed under modus
ponens, Felscher's theorem implies that
$\setofvalid{D}$ and $\setofvalid{E}$ are likewise both closed under modus ponens.
It is also known that, if one drops Felscher's $\Deleven$
and $\Dtwelve$ from $\Druleset$, but adds rule $\Erule$, one obtains a
dialogical characterization of classical logic $\CL$.

Is rule $\Erule$ necessary for modus ponens?
\begin{definition}
  Let $\Nruleset$ be $\Druleset - \{ \Deleven, \Dtwelve \}$, and let $\N$ be
  the set of $\Nruleset$-valid formulas.
\end{definition}
Since dropping the $\Erule$ makes no difference when passing from
$\Eruleset$ to $\Druleset$, it is true that closure under modus ponens
is preserved if one drops $\Erule$ from
$\Druleset - \{ \Deleven, \Dtwelve \} \cup \{ \Erule \}$ (which
dialogically captures $\CL$)?  More simply, is $\N$ closed under modus
ponens?

The answer, curiously, is that $\N$ is closed under modus ponens but
not under uniform substitution.  The following
necessary conditions govern $\N$'s valid implications:
\begin{theorem}\label{n-characterization-theorem}
  If $\Nvalid{\phi\rightarrow\psi}$, then either
\begin{enumerate}
\item $\Nvalid\psi$,
\item $\phi$ is atomic, or
\item $\phi$ is a negation.
\end{enumerate}
\end{theorem}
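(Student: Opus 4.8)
The plan is to read the three alternatives off the very top of an arbitrary $\Nruleset$-winning strategy $\tau$ for $\implies{\phi}{\psi}$. Every such strategy opens with $\proponent$'s assertion of $\implies{\phi}{\psi}$ at move $0$, followed by $\opponent$'s only possible reply: by the particle rule for $\rightarrow$, the sole attack on the implication is to assert $\phi$, so move $1$ is forced and places $\phi$ among $\opponent$'s commitments. Everything hinges on what $\proponent$ can then do with this assertion of $\phi$, and in particular on whether attacking it ever yields $\proponent$ something that a bare game for $\psi$ would not.

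First I would clear away the two exceptional alternatives. If $\phi$ is atomic then alternative (2) already holds; note in passing that in this case $\proponent$ cannot even attack $\opponent$'s $\phi$, since atoms admit no attack, so $\opponent$'s outright concession of the atom is exactly the resource that makes atomic antecedents special. If $\phi$ is a negation then alternative (3) holds. In both cases there is nothing further to prove, so I may assume $\phi$ is a conjunction, a disjunction, or an implication and aim to establish alternative (1), that is, to manufacture an $\Nruleset$-winning strategy for $\psi$ out of $\tau$.

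The construction I would attempt is to delete from $\tau$ all of $\proponent$'s dealings with $\opponent$'s assertion of $\phi$ and to re-root the surviving play at the first node where $\proponent$ asserts $\psi$. To justify this I would first pass to the analogue for $\Nruleset$ of the attack-first, defend-last normal form, whose existence follows by the same trivial argument used for $\CLruleset$, thereby deferring $\proponent$'s defense of the initial implication as long as the rules permit and forcing $\opponent$ to discharge the maximum number of commitments before $\psi$ is ever asserted. I would then try to show that, for these three shapes of $\phi$, $\proponent$'s attacks on $\opponent$'s $\phi$ are eliminable: each such attack either obliges $\proponent$ to assert an antecedent of his own (the $\rightarrow$ case) or merely coaxes out of $\opponent$ a strictly smaller compound (the $\wedge$ and $\vee$ cases), the claim being that no atom thereby reaches $\proponent$ that he could not already obtain inside the $\psi$-subgame. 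Since $\Nruleset$ omits $\Deleven$ and $\Dtwelve$, $\proponent$ is free to reorganize and repeat his defenses, while $\Dten$ and $\Dthirteen$ survive the deletion of these moves, so the pruned play should again be a legal $\Nruleset$-dialogue in which $\proponent$ makes the last move.

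The hard part is precisely this eliminability claim, which is the analogue of the delicate third case in the composition sketch. One must rule out the situation in which $\proponent$'s eventual assertion of $\psi$, or some later defense, relies on an atom that $\opponent$ conceded only while defending $\phi$; the whole force of the theorem is that such a dependence is genuinely possible when $\phi$ is an atom or a negation but not when $\phi$ is a conjunction, disjunction, or implication. I expect this to require an induction on $\phi$ that tracks exactly which atoms $\opponent$ can be compelled to assert while defending it, together with separate attention to the case in which $\psi$ is itself atomic, where re-rooting at an assertion of $\psi$ is impossible and one must instead argue directly. This bookkeeping, rather than the strategy surgery, is where the real difficulty sits.
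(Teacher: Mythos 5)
Your opening analysis and the three-way case split are fine (they are essentially forced), but the engine of your argument --- the ``eliminability claim'' --- is false, so the proof cannot be completed along these lines. Take $\phi = p \wedge q$ and $\psi = \neg\neg p$. In the bare game for $\psi$, $\opponent$ only ever asserts $\neg p$, so by $\Dten$ $\proponent$ can never assert $p$ and loses; thus $\Ninvalid{\neg\neg p}$. In the game for $\implies{\phi}{\psi}$, however, $\proponent$'s attack $\wedge_{L}$ on $\opponent$'s assertion of $p \wedge q$ forces $\opponent$ to assert the \emph{atom} $p$, which is exactly the resource $\proponent$ needs in order to attack $\neg p$ inside the $\psi$-part. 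So attacking a conjunction can deliver atoms to $\proponent$ that are unobtainable in the $\psi$-subgame; your claim that such an attack ``merely coaxes out a strictly smaller compound'' and yields no new atoms breaks down precisely when the conjuncts are atomic, and the induction you propose would have to fail there. (The formula $(p \wedge q) \rightarrow \neg\neg p$ is indeed $\Nruleset$-invalid, as the theorem requires, but not for your reason.)

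What actually makes the theorem true is a mechanism your proposal never touches: dropping $\Dtwelve$ liberates \emph{both} players, and in particular $\opponent$ may answer one and the same attack as many times as desired. Consequently, if $\proponent$ ever attacks an $\opponent$-assertion that admits a defense --- any conjunction, disjunction, or implication --- then $\opponent$ has a legal move available at every subsequent turn (repeat that defense), so no continuation of such a play can end with $\opponent$ unable to move, i.e., $\proponent$ can never win it. Hence in any $\Nruleset$-winning strategy $\proponent$ attacks only negations; applied to move $1$, this says $\proponent$ never touches $\opponent$'s assertion of a compound non-negation $\phi$, and only then can the remainder of the strategy be converted (after also pruning $\proponent$'s possible repeated defenses of move $1$, i.e.\ re-assertions of $\psi$) into a strategy for $\psi$. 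You instead read the absence of $\Deleven$ and $\Dtwelve$ purely as extra freedom for $\proponent$ (``free to reorganize and repeat his defenses''), which is exactly the blind spot; for the same reason your transplanted attack-first normal form is unsound for $\Nruleset$, since in this ruleset attacking whenever possible is fatal rather than normalizing. Note finally that the paper itself contains no proof to compare against --- it defers to~\cite{AU} --- but as it stands your argument has a genuine gap at its central step, and the mechanism that fills it is of the opposite character to the one you conjecture.
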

(For details, see~\cite{AU}.)  Using
Theorem~\ref{n-characterization-theorem}, many failures of uniform
substitution for $\N$ can be produced.  We have, for example, that
$\Nvalid{p \rightarrow \neg\neg p}$ (this can be shown by
calculation), but $\Ninvalid{(p \wedge p) \rightarrow \neg\neg(p
  \wedge p)}$, because the antecedent meets none of the necessary
conditions listed in Theorem~\ref{n-characterization-theorem}.  (That
$\Ninvalid{\neg\neg(p \wedge p)}$ can be shown by calculation.)

Adding rule $\Erule$ to $\Nruleset$ restores uniform substitution (and
maintains closure under modus ponens), so despite appearances, there
must be something about rule $\Erule$ intimately tied to uniform
substitution.

The presence of the $\Erule$ could be regarded as a mere technical
necessity for establishing a correspondence between existence of
winning strategies for dialogue games and some notion of logical
validity, characterized without using dialogues.  The $\Erule$ rule
has no obvious correspondence with everyday dialogue; even if one were
inclined to adopt some kind of regimentation, the $\Erule$ appears to
be a rather strong constraint.  A better understanding of its
eliminability is wanted.  Results such as the curious $\N$ show that,
at least in one well-known setting (classical dialogue games), $\Erule$
cannot be entirely dropped, while in at least one other setting
(intuitionistic logic) it can be dropped.  One problem would be to
find some relaxation of $\Erule$ that suffices for $\CL$.  It seems
fruitful us to investigate the precise conditions under which
repetitions are permitted.  Already some work has been done in this
direction (see~\cite{krabbe85}) for intuitionistic logic.

\subsection{An attempted dialogical characterization of the logic $\LQ$}\label{attempted}

In a Hilbert-style calculus for propositional logic, one can start
with intuitionistic logic and obtains classical logic by adding
additional axioms, such as Peirce's formula, excluded middle, or
double negation elimination (the precise details depend on which
propositional signature one is interested in).

With dialogues, one moves from intuitionistic to classical logic not
by adding but by removing dialogue rules.  In the dialogical setting,
classical logic can be obtained by relaxing the dialogue rules for
intuitionistic logic.\footnote{The precise claim is that one can
  obtain a dialogical characterization of classical logic by removing
  $\Deleven$ and $\Dtwelve$ from the ruleset $\Eruleset$.  We say
  ``can be obtained'' rather than ``is obtained'' because, depending
  on which ruleset one chooses for intuitionistic logic, our claim is
  false: the ruleset $\Nruleset$ and the set $\N$ that it generates
  shows that we do \emph{not} obtain classical logic by simply
  dropping $\Deleven$ and $\Dtwelve$ from the ruleset $\Druleset$.}
One might then naturally wonder if one can give dialogical
characterizations of intermediate logics (i.e., propositional logics
between $\IL$ and $\CL$) by adding dialogue rules to the ruleset
$\CLruleset$.

One natural experiment would be to try to capture a ``simple''
intermediate logic, such as Jankov's logic
$\LQ$~\cite{jankov1968,tvd}, which is $\IL$ together with the
principle of weak excluded middle (WEM), $\neg p \vee \neg\neg p$.
This principle is obviously classically valid but it is independent of
$\IL$ (one can see this using Kripke models).

Fermüller has given a dialogical characterization of $\LQ$ (and other
logics) with the help of parallel dialogue
games~\cite{fermueller2003}.  Fermüller matches winning strategies for
parallel dialogue games with derivability in a calculus based on
hypersequents due to Ciabattoni et
al.~\cite{ciabattoni-gabbay-olivetti1999}.  Fermüller's parallel
dialogue games diverge from the ``sequential'' games employed in this
paper.

Despite Fermüller's solution, one might still seek out a
``sequential'' characterization of $\LQ$, perhaps employing a
non-hypersequent formulation of $\LQ$, such as
Hosoi's~\cite{hosoi1988}.  Ideally, one would seek an intuitive,
self-contained addition or modification to some known ruleset, such as
the $\Eruleset$, that would characterize $\LQ$.  A first step would be
to find such a modification according to which $\neg p \vee \neg\neg
p$ is valid.

To motivate the new dialogue rule that will be introduced soon, let us
consider the $\Eruleset$-dialogue game for $\WEM$; see
Table~\ref{wem-plays}.
\begin{table}[t]
  \centering
  \setlength{\tabcolsep}{5pt}
  \begin{tabular}{rcll}
    \initialmove{$\neg p \vee \neg\neg p$}\\
    \orow{1}{?}{A}{0}\\
    \prow{2}{$\neg p$}{D}{1}\\
    \orow{3}{$p$}{A}{2}\\
  \end{tabular}
  \begin{tabular}{rcll}
    \initialmove{$\neg p \vee \neg\neg p$}\\
    \orow{1}{?}{A}{0}\\
    \prow{2}{$\neg\neg p$}{D}{1}\\
    \orow{3}{$\neg p$}{A}{2}\\
  \end{tabular}
\medskip
\caption{\label{wem-plays}Two losing plays for $\proponent$ in the $\Eruleset$-dialogue for weak excluded middle}
\end{table}
The two $\Eruleset$-dialogues for $\neg p \vee \neg\neg p$ of
Table~\ref{wem-plays} show that $\proponent$ loses quickly no matter
whether the initial attack is defended by asserting $\neg p$ or
$\neg\neg p$.  Since WEM is not intuitionistically valid, by
Felscher's theorem $\proponent$ does not have an $\Eruleset$-winning
strategy for it.  Indeed, the above two games, diverging at move $2$,
together make up all possible ways the game could go; $\proponent$
loses in both.  The obstacle seems to be $\Dten$, which blocks
$\proponent$ from asserting the atom $p$ before Opponent has conceded it.
We can see this by comparing the two $\Eruleset$-dialogues with how
the game goes when playing the ruleset $\CLruleset$ for classical
logic.
\begin{table}[t]
  \centering
  \setlength{\tabcolsep}{5pt}
  \begin{tabular}{rcll}
    \initialmove{$\neg p \vee \neg\neg p$}\\
    \orow{1}{?}{A}{0}\\
    \prow{2}{$\neg p$}{D}{1}\\
    \orow{3}{$p$}{A}{2}\\
    \prow{4}{$\neg\neg p$}{D}{1}\\
    \orow{5}{$\neg p$}{A}{4}\\
    \prow{6}{$p$}{A}{5}
  \end{tabular}
\medskip
\caption{\label{wem-win}A winning play for $\proponent$ in the $\CLruleset$-dialogue for weak excluded middle}
\end{table}
In the ruleset $\CLruleset$, $\proponent$ can return to earlier attack
and defend against them, unlike in the $\Druleset$ and $\Eruleset$
rulesets, in which multiple defenses are ruled out.  $\proponent$'s
ability to repeat earlier defenses makes all the difference, because
he can defend in move $4$, \emph{in a different way}, using
$\opponent$'s ``concession'' of the atom $p$ in move $3$.  (The game
of Table~\ref{wem-win} is in fact a winning strategy for $\WEM$.)

We require a set $S$ of dialogue rules ``between'' the ruleset
$\Eruleset$ and $\CLruleset$.  $\proponent$'s ability to return to
earlier defenses seems to be rather too strong.  Let us consider the
following modified form of $\Dten$:
\begin{itemize}
\item[$(\Dtenstar)$] $\proponent$ may assert an atom $p$ only if $\opponent$ has
  asserted either $p$ or $\neg p$ before.
\end{itemize}
Let $\Erulesetstar$ be $E$ except with $\Dtenstar$ instead of $\Dten$.
The idea is that $\WEM$ is a kind of excluded middle, but only for
\emph{negative} statements.  We modify $\Dten$ according to this
intuition: once $\opponent$ reveals some negative information (i.e.,
concedes a negated atom), $\proponent$ is permitted to proceed with
this information as though it were positive.  Table~\ref{estar-win} is
a calculation showing that at least one instance of
$\Estarvalid{\WEM}$ is valid, and proceeds in an intuitive way (from
the perspective of $\LQ$):
\begin{table}[t]
  \centering
  \setlength{\tabcolsep}{5pt}
  \begin{tabular}{rcll}
    \initialmove{$\neg p \vee \neg\neg p$}\\
    \orow{1}{?}{A}{0}\\
    \prow{2}{$\neg\neg p$}{D}{1}\\
    \orow{3}{$\neg p$}{A}{2}\\
    \prow{4}{$p$}{A}{3}
  \end{tabular}
\medskip
\caption{\label{estar-win}A winning play for $\proponent$ in the $\Erulesetstar$-dialogue for weak excluded middle}
\end{table}
But this rule goes overboard: we have not captured $\LQ$ but something
else, because the formula $\neg p \rightarrow p$ is
$\Erulesetstar$-valid.  This can easily be  seen: $\opponent$'s unique
opening move is to assert $\neg p$, and now $\proponent$ has a unique
response: to assert $p$, winning the game.

The lesson of this failure to capture the logic $\LQ$ using dialogues
is that we had a well-motivated modification to a basic dialogue rule,
but the consequences of adopting this rule were that unacceptable
formulas became valid.  Ideally, we would be able to appeal to a
structure theory that would explain the precise force of rule $\Dten$,
which would inform us ``in advance'' of what would happen if we were
to modify (or drop) it.

\subsection{Characterizing stable logic dialogically}\label{sec:stable}

Stable logic $\Stable$ is the intermediate logic axiomatized by the stability principle
\[
\neg\neg p \rightarrow p
\]

for atoms $p$.  The stability principle is not provable in intuitionistic logic.  This is intuitively clear by considering the Brouwer-Heyting-Kolmogorov interpretation, but it may be definitively shown by, e.g., a suitable Kripke model.  Although it has the flavor of being ``inherently classical'', stable logic is in fact strictly weaker than classical logic.  To obtain classical logic, it suffices to add the stability principle to Jankov's $\LQ$ discussed in Section~\ref{attempted}.  From the standpoint of the program considered in this paper, it is natural to ask how one can give a dialogical characterization of stable logic.  We found in Section~\ref{attempted} that, when we translated our semantic intuition of the principle of weak excluded middle into the dialogical context, the naive attempt failed.  In the case of stable logic, though, one's semantic intuition can be easily expressed dialogically.

To get started, let us consider, from the dialogical perspective, why stability is not provable.  Let us take the $\Druleset$ rules; see Table~\ref{stable-non-win}.  The game can develop in only one way.  $\proponent$ would like to assert $p$, and since $\opponent$ has also asserted it (move~$3$), so rule $\Dten$ wouldn't be violated.  $\proponent$ cannot because he cannot defend against the

\begin{table}[t]
  \centering
  \setlength{\tabcolsep}{5pt}
  \begin{tabular}{rcll}
    \initialmove{$\neg\neg p \rightarrow p$}\\
    \orow{1}{$\neg\neg p$}{A}{0}\\
    \prow{2}{$\neg p$}{A}{1}\\
    \orow{3}{$p$}{A}{2}\\
  \end{tabular}
\medskip
\caption{\label{stable-non-win}A non-winning play for $\proponent$ in the $\Druleset$-dialogue for the stability principle}
\end{table}

Already in $\IL$, one can prove $p \rightarrow \neg\neg p$.  Adding the stability principle, we have $p \leftrightarrow \neg\neg p$, so $p$ and $\neg\neg p$ are, as it were, on a par with one another.  Consider now the following new structural rule:

\begin{itemize}
\item[$(\Dtenprime)$] $\proponent$ may assert an atom $p$ only if $\opponent$ has
  asserted either $p$ or $\neg\neg p$ before.
\end{itemize}

This is reminiscent of our failed dialogical characterization of Jankov's $\LQ$ using $\Dtenstar$.  Here, though, instead of ``semantically identifying'' $p$ and $\neg p$ (which, in hindsight, is the root of the failure), we semantically identify $p$ and its double negation $\neg\neg p$.  Referring to the non-winning play in Table~\ref{stable-non-win}, it is clear that, were $\Dtenprime$ in effect rather than $\Dten$, the game would be ``short-circuited'' because, once $\opponent$ asserts $\neg\neg p$, $\proponent$ can pounce and assert $p$ in defense of the initial attack; $\opponent$ then cannot reply.

Of course, it is quite possible that, these positive signs notwithstanding, the adoption of our modified $\Dten$ has unacceptable consequences, as we saw in the preceding section when $\neg p \rightarrow p$ became dialogically valid.



Interestingly, the motivation behind the formalization of stable logic as presented in Negri and von~Plato~\cite{negri2001structural} is to give a sequent characterization of the familiar principle of indirect proof (``if from $\neg p$ one can derive a contradiction, then $p$ is provable''), here the principle is, roughly, the ``semantic identification'' of $p$ and $\neg\neg p$.

\begin{definition}
Let $\Eprimevalid$ be the set of $\Eruleset \setminus \{ \Dten \} \cup \{ \Dtenprime \}$-valid formulas.
\end{definition}

The aim is to show that $\Eprimevalid$ equals $\Stable$.  We are so far able to show part of this (see Theorem~\ref{stab-soundness}).  The following lemmas show that our proposal stands a chance of dialogically capturing $\Stable$.

\begin{lemma}\label{lemma:lem-1}
If $\phi$ is $\Eprimevalid$, then there exist atoms $p_{1}$, \dots, $p_{n}$ in $\phi$ such that
\[
\left ( \bigwedge_{1 \leq i \leq n} \neg\neg p_{i} \rightarrow p_{i} \right ) \rightarrow \phi
\]
belongs to $\IL$.
\end{lemma}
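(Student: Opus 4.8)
The plan is to reduce the claim to Felscher's theorem. Write $\Sigma$ for the conjunction $\bigwedge_{1\le i\le n}(\implies{\neg\neg p_i}{p_i})$; since $\implies{\Sigma}{\phi}\in\IL$ is, by Felscher's theorem, the same as $\Evalid{\implies{\Sigma}{\phi}}$, it suffices to produce an $\Eruleset$-winning strategy for $\implies{\Sigma}{\phi}$ for an appropriate choice of the $p_i$. I would obtain this strategy by transforming a winning strategy $\tau$ witnessing $\phi\in\Eprimevalid$. The rulesets $\Eruleset$ and $\Eruleset\setminus\{\Dten\}\cup\{\Dtenprime\}$ differ only in that $\Dtenprime$ additionally permits $\proponent$ to assert an atom $p$ once $\opponent$ has conceded $\neg\neg p$; I take the $p_i$ to be exactly the atoms for which $\tau$ makes use of this extra permission. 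Because every statement occurring in a dialogue for $\phi$ is a subformula of $\phi$, each such $\neg\neg p_i$, and hence each $p_i$, occurs in $\phi$, as the statement requires (and if $\tau$ never invokes $\Dtenprime$ then $\tau$ is already an $\Eruleset$-strategy and $\phi\in\IL$ outright).

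The transformed strategy opens with $\proponent$ asserting $\implies{\Sigma}{\phi}$, $\opponent$ attacking with $\Sigma$, and $\proponent$ defending with $\phi$; from there $\proponent$ copies $\tau$ in the resulting subgame for $\phi$. Since $\Dten$ and $\Dtenprime$ constrain only assertions of atoms, every non-atomic move of $\tau$ is legal under $\Eruleset$ and is copied verbatim, and $\opponent$'s options in the $\phi$-subgame are the same under both rulesets, so $\opponent$'s replies continue to track $\tau$. The moves requiring care are those in which $\tau$ has $\proponent$ assert an atom $p_i$: if $\opponent$ has already asserted $p_i$ outright the move is legal under $\Dten$ and is left unchanged; otherwise $\opponent$ has merely conceded $\neg\neg p_i$, and I splice in a gadget that draws on $\opponent$'s concession of the conjunct $C_i:=\implies{\neg\neg p_i}{p_i}$, which $\proponent$ first extracts from $\Sigma$. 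In the gadget, $\proponent$ attacks $C_i$ by asserting $\neg\neg p_i$, whereupon rule $\Erule$ forces $\opponent$ to react. If $\opponent$ defends $C_i$ by asserting $p_i$, then $\opponent$ has now conceded $p_i$, the inserted attack on $C_i$ is thereby closed, and $\proponent$ plays precisely the atom $\tau$ called for --- now licensed by $\Dten$ --- and resumes copying $\tau$. If instead $\opponent$ counterattacks $\proponent$'s $\neg\neg p_i$ by asserting $\neg p_i$, then $\proponent$ attacks $\opponent$'s earlier assertion of $\neg\neg p_i$ with $\neg p_i$, forcing $\opponent$ (again by $\Erule$, since an attack on a negation has no defence) to assert $p_i$; $\proponent$ then attacks $\opponent$'s $\neg p_i$ with this now-licensed $p_i$, after which $\opponent$ has no legal reply and $\proponent$ wins the branch outright.

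What makes the transformation go through is that, under rule $\Erule$, $\opponent$ may only answer $\proponent$'s most recent statement, so each atom $\proponent$ legitimately plays either strands $\opponent$ at once or is answered exactly as in $\tau$; and in the defending case the spliced attack on $C_i$ is closed immediately, so it never becomes the most recent open attack and hence does not interfere with $\Deleven$ when $\proponent$ resumes $\tau$. The main obstacle I anticipate is the structural-rule bookkeeping across splices: I must carry an invariant ensuring that, after a defending gadget, the open attacks --- and in particular the most recent one, which governs $\Deleven$ --- coincide with the state $\tau$ expects, so that the copied continuation stays legal. The counterattacking case is comparatively benign, since it terminates the branch (and so fires at most once along any branch, attacking $\opponent$'s earlier $\neg\neg p_i$ at most once); what remains is to confirm that it genuinely leaves $\opponent$ without a move and to check $\Dtwelve$ and $\Dthirteen$ together with the mechanics of extracting the conjuncts $C_i$ from the possibly iterated conjunction $\Sigma$. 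Granting the invariant, every branch of the transformed tree ends in a $\proponent$-win, so $\Evalid{\implies{\Sigma}{\phi}}$ and therefore $\implies{\Sigma}{\phi}\in\IL$.
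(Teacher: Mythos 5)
Your proposal follows the same global route as the paper's own proof: single out the moves where the witnessing strategy exploits $\Dtenprime$ (the paper calls these ``applications'' of $\Dtenprime$), let the $p_{i}$ be the atoms so used, rewrite the strategy into one for the implication in which the stability hypotheses are explicitly asserted by $\opponent$, and conclude membership in $\IL$ via Felscher's theorem. Where you differ is in the elimination gadget, and your version is in fact the sounder one. The paper's rewrite (Table~\ref{tab:doit}) has $\proponent$ attack $\opponent$'s conceded $\neg\neg p$ by asserting $\neg p$, so that $\opponent$'s forced reaction under rule $\Erule$ is to attack that negation with $p$, after which $\proponent$ asserts the now-licensed $p$ as a \emph{defence}. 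But an attack on a negation can never be answered, so $\opponent$'s forced move remains open forever, and by $\Deleven$ it then blocks every later defence by $\proponent$ --- including the very defence move the table displays. Worse, the paper's gadget never actually consults the stability hypothesis set up in its preamble; if the gadget were structurally legal, one could erase all applications of $\Dtenprime$ without adding any hypothesis, obtaining $\Eruleset$-winning strategies for $\phi$ itself and hence $\Eprimevalid \subseteq \IL$, contradicting the paper's own observation that every instance of $\neg\neg p \rightarrow p$ lies in $\Eprimevalid$ (running the paper's rewrite on $\phi = \neg\neg p \rightarrow p$ makes the failure concrete: the final move violates $\Deleven$). Your gadget dodges exactly this trap: in the branch where $\opponent$ defends $C_{i}$ by asserting $p_{i}$, $\opponent$ makes no attack, so no new open attack against $\proponent$ is created and the $\Deleven$-relevant state that the copied strategy expects is preserved; in the branch where $\opponent$ counter-attacks with $\neg p_{i}$, permanently open attacks do appear, but your continuation ends that branch in an outright $\proponent$-win, so they are harmless. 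The bookkeeping invariant you flag as the main outstanding obligation is precisely the right one to carry through the induction, and the remaining checks ($\Dtwelve$, $\Dthirteen$, conjunct extraction from $\Sigma$, and the fact that rule $\Erule$ makes $\opponent$'s options depend only on $\proponent$'s last statement) all go through. In short: same decomposition and same final appeal to Felscher, but your execution of the key rewriting step repairs a genuine defect in the paper's sketch, which, as printed, is also incomplete (its proof breaks off mid-sentence).
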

\begin{proof}
A move in dialogue games is not labeled by what rule justifies the move (unlike, say, sequent calculi or natural deduction).  The idea is that all structural rules govern all moves.  Nonetheless, let us nonetheless call an ``application'' of $\Dtenprime$ a move by $\proponent$ that would be impossible if the standard $\Dten$ were in effect rather than $\Dtenprime$.  Applications of $\Dtenprime$ thus look, schematically, like this:
\begin{table}[t]
  \centering
  \setlength{\tabcolsep}{5pt}
  \begin{tabular}{rcll}
    \orow{$m$}{$\neg\neg p$}{-}{-}\\
    \vdots\\
    \prow{$n$}{$p$}{-}{-}\\
  \end{tabular}
\end{table}
The idea is, for each application of $\Dtenprime$, to adjust the strategy---in fact, even the initial statement by $\proponent$---so that these applications are ``eliminated'' in the sense that the same sequence of moves goes through if $\Dtenprime$ were replaced the more strict $\Dten$.  Such rewrites are effected as follows:
\begin{itemize}
\item Before the initial statement of $\phi$ by $\proponent$, insert the following moves:
\begin{table}[t]
  \centering
  \setlength{\tabcolsep}{5pt}
  \begin{tabular}{rcll}
    \initialmove{$(\neg\neg p \rightarrow p) \rightarrow \phi$}\\
    \orow{1}{$\neg\neg p \rightarrow p$}{A}{0}\\
    \prow{2}{$\neg\neg p$}{A}{1}\\
    \orow{3}{$\neg p$}{A}{2}\\
    \orow{4}{$\phi$}{D}{1}
    \vdots
  \end{tabular}
\medskip
\caption{Eliminating an application of $\Dtenprime$ by explicitly postulating an instance of stability}
\end{table}
(Of course, the addition of new moves to the beginning of the game needs to be accompanied by relabeling any references.  Thus, a reference to move $3$ should now refer to move $7$, etc.)
\item Replace the application of $\Dtenprime$ as in Table~\ref{tab:doit}:
\begin{table}[t]
  \centering
  \setlength{\tabcolsep}{5pt}
  \begin{tabular}{rcll}
    \orow{$m$}{$\neg\neg p$}{-}{-}\\
    \vdots\\
    \prow{$n$}{$\neg p$}{A}{$m$}\\
    \orow{$n + 1$}{$p$}{A}{$n$}\\
    \prow{$n + 2$}{$p$}{D}{$1$}
  \end{tabular}\caption{\label{tab:doit}Eliminating an application of $\Dtenprime$}
\end{table}
\item Repeat until there are no more applications of $\Dtenprime$.
\end{itemize}
The effect of these repeated rewrites is that any ``exploit'' by $\proponent$ of the extra freedom granted by the relaxed $\Dtenprime$ gets turned into an extra
\end{proof}

The atoms claimed to exist in Lemma~\ref{lemma:lem-1} come from the winning strategy that witnesses that $\phi \in \Eprimevalid$.  It may be possible that, depending on the strategy, we get a different set of atoms.  In any case, adding stability for more atoms can't hurt (weakening is clearly acceptable).

\begin{lemma}\label{lemma:lem-2}
If $\phi \in \Eprimevalid$, then
\[
\left ( \bigwedge_{1 \leq i \leq n} \neg\neg p_{i} \rightarrow p_{i} \right ) \rightarrow \phi
\]
belongs to $\IL$, where $p_{1}$, \dots, $p_{n}$ lists all the atoms occurring in $\phi$.
\end{lemma}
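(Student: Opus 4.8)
The plan is to obtain Lemma~\ref{lemma:lem-2} as an immediate corollary of Lemma~\ref{lemma:lem-1} by an antecedent-strengthening (``weakening'') argument, exactly as the preceding remark suggests. First I would invoke Lemma~\ref{lemma:lem-1} to extract a set of atoms $q_{1}, \dots, q_{m}$, each occurring in $\phi$ and hence appearing among the list $p_{1}, \dots, p_{n}$ of \emph{all} atoms of $\phi$, for which
\[
\left ( \bigwedge_{1 \leq j \leq m} \neg\neg q_{j} \rightarrow q_{j} \right ) \rightarrow \phi
\]
belongs to $\IL$. The only thing to note about this extraction is that the atoms furnished by Lemma~\ref{lemma:lem-1} are genuinely a subset of the atoms of $\phi$: they are read off from the winning strategy witnessing $\phi \in \Eprimevalid$, and every statement asserted in a dialogue for $\phi$ is a subformula of $\phi$, so every atom asserted is one of $p_{1}, \dots, p_{n}$.

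Next I would observe that strengthening the antecedent of an intuitionistically valid implication preserves its validity. Writing $C_{\mathrm{all}}$ for $\bigwedge_{1 \leq i \leq n} (\neg\neg p_{i} \rightarrow p_{i})$ and $C_{\mathrm{sub}}$ for $\bigwedge_{1 \leq j \leq m} (\neg\neg q_{j} \rightarrow q_{j})$, the inclusion $\{ q_{1}, \dots, q_{m} \} \subseteq \{ p_{1}, \dots, p_{n} \}$ yields the projection theorem $C_{\mathrm{all}} \rightarrow C_{\mathrm{sub}} \in \IL$, since a conjunction intuitionistically implies any of its subconjunctions. Composing this with the implication supplied by Lemma~\ref{lemma:lem-1} via the hypothetical syllogism $(A \rightarrow B) \rightarrow ((B \rightarrow C) \rightarrow (A \rightarrow C))$, which is derivable in $\IL$, gives $C_{\mathrm{all}} \rightarrow \phi \in \IL$. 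This is precisely the asserted claim, now with the antecedent ranging uniformly over every atom of $\phi$.

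There is essentially no genuine obstacle at this step; all the real content has already been spent in Lemma~\ref{lemma:lem-1}, and Lemma~\ref{lemma:lem-2} merely normalizes the antecedent so that it quantifies over the fixed list of all atoms of $\phi$ rather than over a strategy-dependent subset. The only points demanding (routine) care are the subset containment just noted and the closure of $\IL$ under composition of implications, both of which are immediate. The payoff of this normalization is that the antecedent of the resulting implication now depends only on $\phi$ and not on the particular winning strategy chosen, which is the convenient form needed downstream for the soundness direction (Theorem~\ref{stab-soundness}).
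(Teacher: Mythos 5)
Your proposal is correct and is essentially the paper's own argument: the paper derives Lemma~\ref{lemma:lem-2} from Lemma~\ref{lemma:lem-1} via the remark that ``adding stability for more atoms can't hurt (weakening is clearly acceptable),'' which is exactly your antecedent-strengthening step. You have merely spelled out the routine details (the strategy-derived atoms being a subset of all atoms of $\phi$, the projection of a conjunction onto a subconjunction, and composition of implications in $\IL$) that the paper leaves implicit.
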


Lemma~\ref{lemma:lem-2} gives a dialogical characterization of a familiar fact about stable logic (see, e.g.,~\cite[Ch.~3]{troelstra-schwichtenberg}), namely that one can ``reduce'' some intermediate logics to $\IL$ provided one explicitly postulates salient features of the intermediate logic (for the case of $\LQ$, see Hosoi~\cite{hosoi1988}, where the fact that explicitly postulate instances of the principle of weak excluded middle, playing the same role there as the stability principle does for us here).  Lemma~\ref{lemma:lem-2} combined with the fact that stable logic includes all instances of the stability principle yields:

\begin{lemma}\label{lemma:lem-3}
If
\[
\left ( \bigwedge_{1 \leq i \leq n} \neg\neg p_{i} \rightarrow p_{i} \right ) \rightarrow \phi
\]
is in $\IL$, then $\phi \in \Stable$.
\end{lemma}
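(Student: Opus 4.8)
The plan is to exploit the fact that $\Stable$, being an intermediate logic, contains $\IL$ and is closed under modus ponens (and under conjunction introduction). The argument is then simply to discharge the antecedent of the given implication by supplying it, conjunct by conjunct, from within $\Stable$ itself.

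First I would observe that, by the very definition of $\Stable$ as $\IL$ augmented with the stability principle $\neg\neg p \rightarrow p$ for atoms $p$, each conjunct $\neg\neg p_{i} \rightarrow p_{i}$ is an axiom of $\Stable$ and hence a theorem of $\Stable$. Since $\Stable$ is closed under conjunction introduction, the conjunction $\bigwedge_{1 \leq i \leq n} (\neg\neg p_{i} \rightarrow p_{i})$ is likewise a theorem of $\Stable$. Next, since $\IL \subseteq \Stable$ and, by hypothesis, the implication $\left( \bigwedge_{1 \leq i \leq n} \neg\neg p_{i} \rightarrow p_{i} \right) \rightarrow \phi$ belongs to $\IL$, that implication belongs to $\Stable$ as well. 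Applying modus ponens in $\Stable$ to this implication together with the conjunction established above yields $\phi \in \Stable$, as required.

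I do not expect a genuine obstacle here: this is the routine ``soundness'' half of the reduction, turning the explicitly postulated stability instances back into honest theorems of $\Stable$ and then discharging them by modus ponens. The only facts requiring (entirely standard) justification are that $\Stable$ extends $\IL$ and is closed under modus ponens and $\wedge$-introduction, both of which are immediate from $\Stable$ being an axiomatic extension of $\IL$. The substantive content of the whole development lies rather in Lemma~\ref{lemma:lem-1} and Lemma~\ref{lemma:lem-2}, which pass from a winning strategy to the intuitionistic implication; Lemma~\ref{lemma:lem-3} merely records that this implication suffices to place $\phi$ in $\Stable$.
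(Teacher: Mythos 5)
Your proposal is correct and matches the paper's (largely implicit) argument: the paper justifies this lemma simply by noting that $\Stable$ contains all instances of the stability principle and extends $\IL$, so the antecedent can be discharged by modus ponens inside $\Stable$, which is exactly the discharge you spell out. Your write-up just makes explicit the closure properties (under modus ponens and $\wedge$-introduction) that the paper takes for granted for an axiomatic extension of $\IL$.
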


Lemmas~\ref{lemma:lem-1} and~\ref{lemma:lem-3} yield:

\begin{theorem}\label{stab-soundness}
If $\phi \in \Eprimevalid$, then $\phi \in \Stable$.
\end{theorem}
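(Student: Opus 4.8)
The plan is to derive the theorem as an immediate composition of Lemma~\ref{lemma:lem-1} and Lemma~\ref{lemma:lem-3}, which between them sandwich $\Eprimevalid$-validity: Lemma~\ref{lemma:lem-1} passes from an $\Eprimevalid$-winning strategy to the $\IL$-provability of a \emph{stability-guarded} implication, and Lemma~\ref{lemma:lem-3} passes from that $\IL$-provability to membership in $\Stable$. All the real work has therefore been pushed into the two lemmas, and what remains for the theorem is simply to check that their interfaces match.

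Concretely, I would fix an arbitrary $\phi \in \Eprimevalid$. First I would apply Lemma~\ref{lemma:lem-1} to obtain a finite list of atoms $p_1, \dots, p_n$ occurring in $\phi$ for which
\[
\left( \bigwedge_{1 \leq i \leq n} \neg\neg p_i \rightarrow p_i \right) \rightarrow \phi
\]
lies in $\IL$. This is precisely the hypothesis of Lemma~\ref{lemma:lem-3}, so I would then invoke that lemma to conclude $\phi \in \Stable$. Because the guarded implication produced by Lemma~\ref{lemma:lem-1} already has the exact syntactic shape consumed by Lemma~\ref{lemma:lem-3}, no reconciliation between the two is needed and the chaining is direct; the degenerate case $n = 0$ (where the antecedent is vacuous) is covered as well, since then the implication is just $\phi$ itself in $\IL \subseteq \Stable$.

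For completeness I would record why Lemma~\ref{lemma:lem-3} closes the argument, as this is the conceptual point even though the formal step is short: each conjunct $\neg\neg p_i \rightarrow p_i$ is an instance of the stability principle and hence a theorem of $\Stable$, and $\Stable$, being an intermediate logic, contains $\IL$; so the guarded implication is $\Stable$-provable, and discharging its antecedent by conjoining the stability instances and applying modus ponens within $\Stable$ yields $\phi \in \Stable$. This is the familiar reduction of an intermediate logic to $\IL$ modulo its characteristic axioms, here the stability principle.

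I expect the theorem itself to present no genuine obstacle, since it is essentially a corollary; the difficulty lives upstream in Lemma~\ref{lemma:lem-1}, whose rewriting procedure (replacing each use of $\Dtenprime$ by an explicitly postulated instance of stability, as in Table~\ref{tab:doit}) must be shown to terminate and to deliver a bona fide $\Dten$-respecting strategy witnessing the required $\IL$-provability. Once that is granted, the present theorem follows by the composition above. I would also flag that this establishes only the soundness inclusion $\Eprimevalid \subseteq \Stable$; the converse is not treated here.
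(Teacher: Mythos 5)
Your proof matches the paper's own argument exactly: the paper derives Theorem~\ref{stab-soundness} precisely by composing Lemma~\ref{lemma:lem-1} (existence of atoms $p_1,\dots,p_n$ with the stability-guarded implication in $\IL$) with Lemma~\ref{lemma:lem-3} (passing from that $\IL$-provability to membership in $\Stable$). Your additional remarks on the $n=0$ case and on why Lemma~\ref{lemma:lem-3} holds are correct elaborations but do not change the route.
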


Evidently, if $\phi \in \IL$, then $\phi \in \Eprimevalid$ (the extra freedom of $\Dtenprime$ granted to $\proponent$ compared to $\Dten$ need not be exercised).  Moreover, all instances of the stability scheme are, by construction, in $\Eprimevalid$, so $\Eprimevalid$ properly extends $\IL$.

We have so far not be able to problem of showing the converse of Theorem~\ref{stab-soundness}, but we conjecture that our proposed characterization of $\Stable$ is indeed correct.  To complete the proof that $\Eprimevalid$ is $\Stable$, it would suffice to show that~(i)~$\Eprimevalid$ is closed under modus ponens, and~(ii)~$\Eprimevalid$ is closed under uniform substitution.  (Such an approach is grounded on a Hilbert-style approach to stable logic.)  Another route would be to try to go in the opposite direction taken in the proof of Lemma~\ref{lemma:lem-1}.  There, we defined the notion of ``application'' of $\Dtenprime$ and showed how to eliminate them by explicitly postulating instances of the stability principle and manipulating the way the winning strategy starts.  Ideally, one would want to go the other way: show how, from an $\Eruleset$-winning strategy for
\[
\left ( \bigwedge_{1 \leq i \leq n} \neg\neg p_{i} \rightarrow p_{i} \right ) \rightarrow \phi,
\]
to construct a $\Dtenstar$-winning strategy for $\phi$ alone by introducing (rather than eliminating) what we have called applications of $\Dtenstar$.  The chief difficulty is to characterize the possible ways that a stability hypothesis $\neg\neg p \rightarrow p$ is used.

\subsection{Independent rulesets}\label{sec:independent}

Felscher indicates that rule $\Erule$ implies $\Dthirteen$ and, for
odd positions, $\Deleven$ and $\Dtwelve$, too.  This means that every
dialogue that adheres to rule $\Erule$ also adheres to $\Dthirteen$,
and if we understand $\Deleven$ and $\Dtwelve$ as quantifying over
move positions $0$, $1$, $\dots$, then every dialogue that adheres to
the $\Erule$ also adheres to $\Deleven$ and $\Dtwelve$ if the
quantifiers in these rules are restricted to odd numbers.

The fact that standard dialogue rules can imply each other, wholly or
partially, is an obstacle for solving the composition problem for
subsets of standard rulesets.  What we would seek are
\emph{independent} sets of dialogue rules, that is, sets of rules each
member of which is not implied by the others.

The examples above of $\N$ and the failed dialogical characterization
of $\LQ$ demonstrate the sensitive dependence of $\setofvalid{S}$ on a
set $S$ of dialogue rules.  Slight changes to a set $S$ of dialogue
rules can cause $\setofvalid{S}$ to shift from being a familiar logic
to a curiosity like $\N$ or the result of the failed characterization
of $\LQ$ (which may not even be a logic at all, in the sense of not
being closed under modus ponens)  On the other hand, sometimes simple semantic intuitions do apparently lead to success, as in the case of stable logic in Section~\ref{sec:stable}.

The demonstrated sensitivity may turn out to be an intrinsic feature
of the dialogical approach.  Moreover, sensitivity can be found
outside dialogues, too: one can jump from intuitionistic to classical
logic in a Hilbert-style calculus---an enormous leap, from the point
of view of the lattice of intermediate logics---in a single step by
adding a single new axiom (e.g., excluded middle or Peirce's formula).
And one can move from intuitionistic to classical logic by simply
dropping a constraint on the number of formulas that can appear on the
right-hand side of a sequent.\footnote{The claim here is not that all
  Hilbert-style and sequent calculi for intuitionistic logic are such
  that adding one new principle or dropping exactly one structural
  condition are sufficient to capture classical logic; there are
  precise calculi for which these claims hold.  A concrete example of
  a suitable Hilbert-style calculus is provided by the axioms $B$,
  $C$, $K$, and $I$, with Peirce's formula~\cite{seldin1997}; the
  calculus $G2$~\cite{troelstra-schwichtenberg} is a suitable example
  of a sequent calculus.}

Nonetheless, the non-independence of standard sets of dialogue rules
is an obstacle to solving both the uniform and non-uniform
composition problems.  From a foundation of an independent set of
dialogue rules, the problem of exposing some structure becomes easier
because we can gradually add or subtract rules with the confidence
that we are not making impermissible ``jumps'' in the space of
possibilities.

\section{Conclusion}\label{sec:conclusion}

Dialogue games can be viewed either as a stylized form of rational
interaction or as alternative logical calculi.  We have raised two
problems---the modus ponens problem and the uniform substitution
problem---that, on either view, pose challenges for the dialogician
that are, so far, largely unaddressed.  It seems that today much
remains to be done for dialogues to give them their proper
proof-theoretic foundation.  We have argued, more precisely, that one
important gap is that we lack a structure theory for dialogues that
could help shed light on the problem of precisely what is the force of
various dialogue rules.

We are not able to give a precise characterization of ``direct''
solutions to problems in dialogical logic.  We gave a few results and
conjectures (see, e.g., Lemma~\ref{no-explosion}) along the intended
lines.  we in fact do not yet possess a direct solution.  We left open
the problem of showing, directly, that classical propositional logic
is consistent, in the sense that there is no $\CLruleset$-valid
formula $\phi$ such that $\CLvalid{\phi}$ but satisfies $\CLvalid{\phi
  \rightarrow \psi}$ for all formulas $\psi$.  It seems plausible to us that a direct solution to
the problem is available; the solution may go via a normal form
theorem for dialogue games.

One reason behind our preference for direct solutions to problems in
dialogical logic is to stimulate the development of the dialogue
formalism so that it becomes more systematic.  In our view, resorting
to external devices to establish basic results in dialogical logic is
not a vote of confidence for dialogues, but is implicitly a concession
that the formalism is awkward and difficult to work with.

Returning to the non-winning play that motivated our investigation (see Table~\ref{stable-non-win}, another option for proceeding would be to drop rule $\Deleven$ but keep the other rules.  This opens the door to $\proponent$ having a winning strategy, but it is not clear what the logical characterization is.  More precisely: A known characterization of $\CL$ is the $\Eruleset$ minus rules $\Deleven$ and $\Dtwelve$; it is possible that dropping $\Deleven$, but keeping $\Dtwelve$, gives us stable logic or at least a logic in which stability is provable.  We leave this as an open problem.  We also leave open the problem of showing that $\Eprimevalid$ is closed under uniform substitution and modus ponens.  Solving both of these two problems would, combined with our soundness result, entail that the simple modification we made to the structural rules for intuitionistic logic do indeed yield stable logic.

\bibliographystyle{splncs03}
\bibliography{alama}

\end{document}